\documentclass[12pt,reqno,a4paper]{article}

\usepackage{amsmath,amsthm}
\usepackage{amssymb}
\usepackage{mathrsfs,enumerate}
\usepackage{slashbox}
\usepackage{mathtools} 
\usepackage{color} 

\setlength{\textwidth}{16cm}
\setlength{\textheight}{25cm}
\setlength{\oddsidemargin}{0cm}
\setlength{\evensidemargin}{0cm}
\setlength{\topmargin}{-2cm}

\newtheorem{theorem}{Theorem}[section]
\newtheorem{prop}[theorem]{Proposition}
\newtheorem{cor}[theorem]{Corollary}
\newtheorem{conj}[theorem]{Conjecture}
\newtheorem{lemma}[theorem]{Lemma}
\newtheorem{definition}[theorem]{Definition}

\theoremstyle{remark}
\newtheorem{remark}[theorem]{Remark}
\newtheorem{example}[theorem]{Example}

\def\Li{{\rm Li}}
\def\aa{{\mathscr{A}}}
\def\ee{{\mathcal{E}}}
\def\EE{{\mathbb{E}}}

\def\Q{\mathbb{Q}}
\def\TT{{\widetilde{T}}}
\def\TTcal{\widetilde{\mathcal{T}}}
\DeclareFontEncoding{OT2}{}{}
\DeclareFontSubstitution{OT2}{cmr}{m}{n}
\DeclareSymbolFont{cyss}{OT2}{wncyss}{m}{n}
\DeclareMathSymbol{\sh}{\mathbin}{cyss}{`x}
\def\Z{\mathbb{Z}}
\allowdisplaybreaks

\begin{document}

\title{Multiple $L$-values of level four, poly-Euler numbers, and related zeta functions} 

\author{Masanobu Kaneko and Hirofumi Tsumura}



\date{}
\maketitle

\begin{abstract}
We present several formulas for some specific multiple $L$-values of conductor four.
This grew out from the study of zeta functions of level four of Arakawa-Kaneko type.
Closely related is a new version of multiple poly-Euler numbers and we briefly discuss this too.
\end{abstract}

\section{Introduction}\label{sec:intro}

This paper is in a sense a continuation of our previous work \cite{KT2018, KT2020-ASPM, KT2020-Tsukuba}.
There we studied a certain type of zeta functions and their values at positive as well as negative integer arguments,
the prototype being the so-called Arakawa-Kaneko zeta function investigated in \cite{AK1999}, whose values at positive integers
can be written in terms of multiple zeta values and at negative integers in terms of poly-Bernoulli numbers.  

Through our study in this context in the case of level 4, we are naturally
led to the investigation of a class of multiple $L$-values of level (or conductor) 4, which in our notation is given by
\[  L_\sh(k_1,\ldots,k_r;\chi_4,\ldots,\chi_4)=\sum_{1\leq m_1<\cdots<m_r \atop m_j \equiv j \bmod 2} \frac{(-1)^{(m_r-r)/2}}{m_1^{k_1}\cdots m_r^{k_r}}. \]
We discovered that these $L$-values with special indices satisfy a notable linear relation with combinatorial numbers (`Entringer 
numbers') as coefficients. Also we found a (conjectural) connection of double $L$-values to modular forms of level four, a relation
of previously studied level 2 $L$-values (`multiple $T$-values') to our level 4 values, 
and a formula for the generating function of `height one' $L$-values in terms of the Appell hypergeometric
function $F_1$, being in contrast to Gauss's ${}_2F_1$ in the previous cases. 

In the next section, we present these findings on the level 4 multiple $L$-values in detail. Various iterated integral expressions are our basic tools.
In Section~\ref{sec:zeta-polyEuler}, we introduce and study the motivating Arakawa-Kaneko type zeta function of level 4
and the corresponding multiple poly-Euler numbers (different from those introduced and studied in Sasaki \cite{Sasaki2012} 
and Ohno-Sasaki \cite{OS2012,OS2013,OS2017}).  In the final Section~\ref{sec:zetafunct} we connect the one variable multiple
$\TT$-function to the zeta function introduced in \S\ref{sec:zeta-polyEuler} and obtain several family of relations among multiple
$\TT$-values. The formulas and the techniques used in these two sections are more or less parallel to our previous studies.

\section{Some results and conjectures on multiple $L$-values of level four}\label{sec:ttilde}

\subsection{Definition}\label{subsec:def}
 
In \cite{AK2004}, Arakawa and the first named author defined two types of multiple $L$-values as follows. Let $f_j\,:\,\mathbb{Z} \to \mathbb{C}$ $(j=1,\ldots,r)$ be some periodic functions
(in the sequel we only consider the Dirichlet character of conductor 4 and its square). 
For integers $k_1,\ldots,k_r\in \mathbb{Z}_{\geq 1}$, define
\begin{align}
& L_\sh(k_1,\ldots,k_r;f_1,\ldots,f_r)=\sum_{1\leq m_1<\cdots<m_r} \frac{f_1(m_1)f_2(m_2-m_1)\cdots f_r(m_r-m_{r-1})}{m_1^{k_1}m_2^{k_2}\cdots m_r^{k_r}},  \label{MLV-sh}\\
& L_\ast(k_1,\ldots,k_r;f_1,\ldots,f_r)=\sum_{1\leq m_1<\cdots<m_r} \frac{f_1(m_1)f_2(m_2)\cdots f_r(m_r)}{m_1^{k_1}m_2^{k_2}\cdots m_r^{k_r}}. \label{MLV-star}
\end{align}
If $k_r\geq 2$, these series are absolutely convergent, and if $k_r=1$, these are interpreted as 
\begin{align*}
& L_\sh(k_1,\ldots,k_r;f_1,\ldots,f_r)=\lim_{M\to \infty}\sum_{1\leq m_1<\cdots<m_r<M} \frac{f_1(m_1)f_2(m_2-m_1)\cdots f_r(m_r-m_{r-1})}{m_1^{k_1}m_2^{k_1}\cdots m_r^{k_r}},  \\
& L_\ast(k_1,\ldots,k_r;f_1,\ldots,f_r)=\lim_{M\to \infty}\sum_{1\leq m_1<\cdots<m_r<M} \frac{f_1(m_1)f_2(m_2)\cdots f_r(m_r)}{m_1^{k_1}m_2^{k_1}\cdots m_r^{k_r}},  
\end{align*}
when the limits exist. As the notation suggests, these values satisfy (under a suitable condition, see~\cite{AK2004}) shuffle and stuffle (or harmonic) product rules respectively.

Let $\chi_4$ be the (unique) primitive Dirichlet character of conductor $4$, and consider the case of $f_j=\chi_4$ for all $j$.
Then it is readily seen by definition that 
\begin{equation}
 L_\sh(k_1,\ldots,k_r;\chi_4,\ldots,\chi_4)=\sum_{1\leq m_1<\cdots<m_r \atop m_j \equiv j \bmod 2} \frac{(-1)^{(m_r-r)/2}}{m_1^{k_1}\cdots m_r^{k_r}},\label{MLV-chi-4} 
\end{equation}
which is convergent even when $k_r=1$.
To ease notation, and introducing the factor $2^r$ for later convenience, we define the `multiple $\TT$-values' for any tuple of 
positive integers $(k_1,\ldots,k_r)$  
\begin{equation}\label{ttilde-def} \TT(k_1,\ldots,k_r):=2^r L_\sh(k_1,\ldots,k_r;\chi_4,\ldots,\chi_4)
=2^r\sum_{1\leq m_1<\cdots<m_r \atop m_j \equiv j \bmod 2} \frac{(-1)^{(m_r-r)/2}}{m_1^{k_1}\cdots m_r^{k_r}}. \end{equation}
This is in contrast to our previously studied object 
\begin{equation}\label{t-def} T(k_1,\ldots,k_r)=2^rL_\sh(k_1,\ldots,k_r;\chi_4^2,\ldots,\chi_4^2) 
=2^r\sum_{1\leq m_1<\cdots<m_r \atop m_j \equiv j \bmod 2} \frac{1}{m_1^{k_1}\cdots m_r^{k_r}},\end{equation}
which we called `multiple $T$-values.'  For this, the last  entry $k_r$ should be larger than 1.  More generally, we introduced in~\cite{KT2020-ASPM, KT2020-Tsukuba} 
a level 2 analogue of the multiple polylogarithms for $k_1,\ldots,k_r\in \mathbb{Z}_{\ge1}$:
\begin{equation}
A(k_1,\ldots,k_r;z)=2^r\sum_{0<m_1<\cdots <m_r\atop m_i\equiv i\bmod 2}
\frac{z^{m_r}}{m_1^{k_1}\cdots m_{r}^{k_{r}}}\quad (|z|<1).  \label{Def-A-polylog}
\end{equation}
In particular, 
\begin{equation}\label{A1exp} 
A(1;z)=2\sum_{n=0}^\infty \frac{z^{2n+1}}{2n+1}=2\tanh^{-1}(z)=\log\left(\frac{1+z}{1-z}\right)=\int_0^z 
\frac{2dt}{1-t^2}.
\end{equation}
As is easily seen (parallel to the case of usual multiple polylogarithms), the function $A(k_1,\ldots,k_r;z)$
satisfies the derivative formula (see \cite[Lemma 5.1]{KT2020-ASPM})
\begin{align}\label{deriv-a-multi}
\frac{d}{dz}A(k_1,\ldots,k_r;z)
& =
\begin{cases}
\displaystyle{\frac{1}{z}}\,A(k_1,\ldots,k_{r-1},k_r-1;z) & (k_r\geq 2),\\
\displaystyle{\frac{2}{1-z^2}}\,A(k_1,\ldots,k_{r-1};z) & (k_r=1),
\end{cases}
\end{align}
and hence $A(k_1,\ldots,k_r;z)$ is realized as an iterated integral starting with \[ A(1;z)=\int_0^z \frac{2dt}{1-t^2}\]  as follows.
We introduce a compact notation of writing an iterated integral. 
Let $a$ and $b$ be points in the closed unit disk. For differential 1-forms
$ \Omega_1, \Omega_2,\ldots,\Omega_k$, we understand by the expression 
\[ \int_a^b \Omega_1\circ  \Omega_2\circ \cdots\cdots \circ\Omega_k\]
the iterated integral
\[ \int_a^b\left(\int_a^{t_k}\cdots\left(\int_a^{t_3}\left(\int_a^{t_2}\Omega_1\right)\Omega_2\right)\cdots\cdots\Omega_k\right),\]
where $\Omega_i=\Omega_i(t_{i})dt_i$. 
Here and in the following, every path of integration is considered inside the unit disk. Under this notation, from \eqref{A1exp}
and \eqref{deriv-a-multi}, we immediately have
\begin{equation*} 
A(k_1,\ldots,k_r;z)= \int_0^z \Omega_1\circ \Omega_2\circ \cdots\cdots \circ\Omega_k,
\end{equation*}
where $\Omega_j=2dt_j/(1-t_j^2)$ if $j\in\{1,k_1+1,\ldots,k_1+\cdots+k_{r-1}+1\}$ and $\Omega_j=dt_j/t_j$
otherwise.  Note that the total number $k$ of differential forms is the {\it weight} $k_1+\cdots+k_r$ of the index
$(k_1,\ldots,k_r)$, and the number of $2dt/(1-t^2)$ is the {\it depth} $r$, the total number of components of the index. 

Since
\begin{equation}
T(k_1,\ldots,k_r)=A(k_1,\ldots,k_r;1)\ \ (k_r\geq 2),\label{A-T-rel}
\end{equation}
we have  an integral expression of multiple $T$-values (see also \cite[Theorem~2.1]{KT2020-Tsukuba})
\begin{align}
&T(k_1,k_{2},\ldots,k_{r})  \nonumber  \\
&=\int_0^1\frac{2dt}{1-t^2}\circ\underbrace{\frac{dt}{t}\circ\cdots\circ\frac{dt}{t}}_{k_1-1}\circ
\frac{2dt}{1-t^2}\circ\underbrace{\frac{dt}{t}\circ\cdots\circ\frac{dt}{t}}_{k_2-1}\circ\cdots\cdots\circ
\frac{2dt}{1-t^2}\circ\underbrace{\frac{dt}{t}\circ
\cdots\circ\frac{dt}{t}}_{k_r-1} \nonumber \\
&=\mathop{\int\cdots\int}\limits_{0<t_1<\cdots <t_k<1}\frac{2dt_1}{1-t_1^2}\underbrace{\frac{dt_2}{t_2}\cdots\frac{dt}{t}}_{k_1-1}
\frac{2dt}{1-t^2}\underbrace{\frac{dt}{t}\cdots\frac{dt}{t}}_{k_2-1}\cdots\cdots\frac{2dt}{1-t^2}\underbrace{\frac{dt}{t}
\cdots\frac{dt_{k}}{t_{k}}}_{k_r-1}.\label{T-integ-exp}
\end{align}
For simplicity, we have suppressed the subscripts of variables of most of the differential forms.

In the same vein, the multiple $\TT$-values can also be given as integrals. This fact is fundamental to almost all of our proofs of Theorems.
First, as in the case of usual multiple zeta values and $T$-values, by expanding $1/(1+t^2)$ into geometric series and 
integrating term by term, we obtain:  
\begin{prop}
For $k_1,\ldots,k_r\in \mathbb{Z}_{\geq 1}$, we have 
\begin{align}
&\TT(k_1,k_{2},\ldots,k_{r})  \nonumber  \\ 
&=\int_0^1\frac{2dt}{1+t^2}\circ\underbrace{\frac{dt}{t}\circ\cdots\circ\frac{dt}{t}}_{k_1-1}\circ
\frac{2dt}{1+t^2}\circ\underbrace{\frac{dt}{t}\circ\cdots\circ\frac{dt}{t}}_{k_2-1}\circ\cdots\cdots\circ
\frac{2dt}{1+t^2}\circ\underbrace{\frac{dt}{t}\circ
\cdots\circ\frac{dt}{t}}_{k_r-1} \nonumber \\
&=\mathop{\int\cdots\int}\limits_{0<t_1<\cdots <t_k<1}\frac{2dt_1}{1+t_1^2}\underbrace{\frac{dt_2}{t_2}\cdots\frac{dt}{t}}_{k_1-1}
\frac{2dt}{1+t^2}\underbrace{\frac{dt}{t}\cdots\frac{dt}{t}}_{k_2-1}\cdots\cdots\frac{2dt}{1+t^2}\underbrace{\frac{dt}{t}
\cdots\frac{dt_{k}}{t_{k}}}_{k_r-1}.\label{integ-exp}
\end{align}
\end{prop}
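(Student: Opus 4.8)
The plan is to prove the identity by realizing both sides as the value at $z=1$ of a single holomorphic function on the unit disk, a level-four analogue of the polylogarithm \eqref{Def-A-polylog}. Concretely, I would set
\[\widetilde{A}(k_1,\ldots,k_r;z)=2^r\sum_{0<m_1<\cdots<m_r\atop m_i\equiv i\bmod 2}\frac{(-1)^{(m_r-r)/2}z^{m_r}}{m_1^{k_1}\cdots m_r^{k_r}}\qquad(|z|<1),\]
so that by \eqref{ttilde-def} we have $\TT(k_1,\ldots,k_r)=\widetilde{A}(k_1,\ldots,k_r;1)$, the value being attained as a limit $z\to 1^-$ exactly as in the interpretation of the $k_r=1$ case. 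The base case is $\widetilde{A}(1;z)=2\sum_{n\ge0}(-1)^nz^{2n+1}/(2n+1)=2\arctan z=\int_0^z 2\,dt/(1+t^2)$, which already matches the leftmost form of \eqref{integ-exp}.

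The heart of the argument is to show that $\widetilde{A}$ obeys the same recursive derivative rule as in \eqref{deriv-a-multi}, with $2/(1+z^2)$ replacing $2/(1-z^2)$:
\[\frac{d}{dz}\widetilde{A}(k_1,\ldots,k_r;z)=\begin{cases}\dfrac1z\,\widetilde{A}(k_1,\ldots,k_{r-1},k_r-1;z)&(k_r\ge2),\\[2mm]\dfrac{2}{1+z^2}\,\widetilde{A}(k_1,\ldots,k_{r-1};z)&(k_r=1).\end{cases}\]
The case $k_r\ge2$ is immediate from termwise differentiation. For $k_r=1$ I would expand $2/(1+z^2)=2\sum_{n\ge0}(-1)^nz^{2n}$, multiply by the series for $\widetilde{A}(k_1,\ldots,k_{r-1};z)$, and reindex by $m_r:=m_{r-1}+2n+1$ (the unique way to produce an odd gap $m_r-m_{r-1}$ with the correct parity $m_r\equiv r$). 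The decisive point is the sign bookkeeping: the freshly produced factor $(-1)^n$ combines with the old sign as
\[(-1)^{(m_{r-1}-(r-1))/2}\cdot(-1)^{(m_r-m_{r-1}-1)/2}=(-1)^{(m_r-r)/2},\]
which is precisely the telescoping $\chi_4(m_1)\chi_4(m_2-m_1)\cdots\chi_4(m_r-m_{r-1})=(-1)^{(m_r-r)/2}$ underlying \eqref{MLV-chi-4}. This computation, not any analytic subtlety, is the main obstacle.

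Granting the derivative rule, I would integrate from $0$, using $\widetilde{A}(k_1,\ldots,k_r;0)=0$, to identify $\widetilde{A}(k_1,\ldots,k_r;z)$ with the iterated integral $\int_0^z\Omega_1\circ\cdots\circ\Omega_k$ of \eqref{integ-exp} (with endpoint $z$): since the outermost form of an iterated integral is stripped off by $d/dz$, both functions satisfy the same first-order relation with the same initial value, so they agree on $|z|<1$ by induction on the weight $k$. Finally I would let $z\to1^-$. Unlike the $T$-value case, the form $2\,dt/(1+t^2)$ is regular at $t=1$, so the iterated integral converges at the endpoint; together with Abel's theorem for the (conditionally) convergent series \eqref{ttilde-def} this gives $\TT(k_1,\ldots,k_r)=\widetilde{A}(k_1,\ldots,k_r;1)=\int_0^1\Omega_1\circ\cdots\circ\Omega_k$, which is \eqref{integ-exp}. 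This boundary step is genuinely easier here than for $T$-values, and explains why $\TT$ converges even when $k_r=1$.
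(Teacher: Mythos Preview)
Your proposal is correct and follows essentially the same approach the paper indicates: the paper simply says the formula follows ``by expanding $1/(1+t^2)$ into geometric series and integrating term by term,'' and your construction of $\widetilde A(k_1,\ldots,k_r;z)$ together with the derivative rule is exactly a careful formalization of that computation. Your explicit treatment of the $z\to1^-$ limit via Abel's theorem is a nice addition that the paper leaves implicit.
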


A typical consequence, which we use later several times, is the following identity.

\begin{cor} For any $n\ge1$, we have 
\begin{equation}\label{11111}
\frac{\TT(1)^n}{n!}=\TT(\underbrace{1,\ldots,1}_n). 
\end{equation}
\end{cor}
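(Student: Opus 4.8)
The plan is to reduce the claim to the classical fact that the iterated integral of a \emph{single} fixed 1-form, repeated $n$ times, equals $1/n!$ times the $n$-th power of its integral. First I would specialize the Proposition to the index $(1,\ldots,1)$ (with $n$ entries). Since every $k_j=1$, none of the forms $dt/t$ appear, and the integral representation \eqref{integ-exp} collapses to
\[
\TT(\underbrace{1,\ldots,1}_n)=\int_0^1 \underbrace{\omega\circ\cdots\circ\omega}_n,\qquad \omega:=\frac{2\,dt}{1+t^2},
\]
while the case $n=1$ of the same Proposition gives $\TT(1)=\int_0^1\omega$. Thus it suffices to prove $\int_0^1 \omega^{\circ n}=\frac1{n!}\bigl(\int_0^1\omega\bigr)^n$.

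Next I would prove this by induction on $n$ using the differentiation rule for iterated integrals. Set $F(z):=\int_0^z\omega$ and $I_n(z):=\int_0^z \omega^{\circ n}$, so that $F'(z)=2/(1+z^2)$. Peeling off the outermost integration (the variable $t_n$ in the nested integral defining the iterated integral) gives $I_n'(z)=F'(z)\,I_{n-1}(z)$ for $n\ge1$, together with $I_0(z)=1$ and $I_n(0)=0$ for $n\ge1$. The claim $I_n(z)=F(z)^n/n!$ then follows by induction: assuming it for $n-1$, we get $I_n'(z)=F'(z)F(z)^{n-1}/(n-1)!=\frac{d}{dz}\bigl(F(z)^n/n!\bigr)$, and both sides vanish at $z=0$, so they agree for all $z\in[0,1]$. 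Setting $z=1$ yields the corollary.

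Equivalently---and this is the conceptual reason---one may invoke the shuffle product: since the multiple $\TT$-values are iterated integrals, the product of two of them is the integral of the shuffle of the corresponding words. The word attached to $\TT(1)$ is the single form $\omega$, and the iterated shuffle of $n$ copies of a length-one word equals $n!$ times the length-$n$ word $\omega\cdots\omega$; integrating gives $\TT(1)^n=n!\,\TT(1,\ldots,1)$ directly. I expect no serious obstacle here: because $1+t^2$ does not vanish on $[0,1]$, the form $\omega$ is smooth up to the endpoint, so there are no convergence or regularization issues despite the last index being $1$, and the only point requiring care is the correct bookkeeping of the ``peel off the outermost form'' differentiation rule (and its boundary values) used in the induction.
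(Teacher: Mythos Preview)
Your proof is correct and follows exactly the approach the paper intends: the Corollary is stated immediately after the iterated integral expression \eqref{integ-exp} as ``a typical consequence,'' and your argument simply makes explicit the standard fact that an $n$-fold iterated integral of a single repeated form equals $1/n!$ times the $n$-th power of its integral. The paper gives no further details, so your induction (or equivalently the shuffle-product remark) is precisely the intended justification.
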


If we make a change of variables $t\to (1-u)/(1+u)$ appeared in \cite{KT2020-Tsukuba} for $T$-values,
we obtain from \eqref{integ-exp} another integral expression when $k_r>1$ (in the case of `admissible' index). For the definition of the `dual' of an index,
see for instance \cite[\S3.1]{KT2020-Tsukuba}.

\begin{cor}
Suppose $k_r>1$ and let $(l_1,\ldots,l_s)$ be the dual index of $(k_1,k_{2},\ldots,k_{r})$.  Then we have
\begin{align}
&\TT(k_1,k_{2},\ldots,k_{r}) \nonumber\\
&=\int_0^1\frac{2du}{1-u^2}\circ\underbrace{\frac{2du}{1+u^2}\circ\cdots\circ\frac{2du}{1+u^2}}_{l_1-1}\circ
\frac{2du}{1-u^2}\circ\underbrace{\frac{2du}{1+u^2}\circ\cdots\circ\frac{2du}{1+u^2}}_{l_2-1}\circ\cdots \nonumber\\
& \qquad\qquad\qquad\cdots\circ
\frac{2du}{1-u^2}\circ\underbrace{\frac{2du}{1+u^2}\circ
\cdots\circ\frac{2du}{1+u^2}}_{l_s-1} \nonumber \\
&=\mathop{\int\cdots\int}\limits_{0<u_1<\cdots <u_k<1}\frac{2du_1}{1-u_1^2}\underbrace{\frac{2du_2}{1+u_2^2}\cdots\frac{2du}{1+u^2}}_{l_1-1}
\frac{2du}{1-u^2}\underbrace{\frac{2du}{1+u^2}\cdots\frac{2du}{1+u^2}}_{l_2-1}\cdots   \label{integ-exp2} \\
&\qquad\qquad\qquad\cdots\frac{2du}{1-u^2}\underbrace{\frac{2du}{1+u^2}\cdots\frac{2du_k}{1+u_k^2}}_{l_s-1}. \nonumber
\end{align}
\end{cor}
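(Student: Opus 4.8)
The plan is to deduce \eqref{integ-exp2} from the integral representation \eqref{integ-exp} by carrying out the substitution $t=(1-u)/(1+u)$ directly inside the iterated integral. First I would record the relevant features of this change of variable: it is an orientation-reversing involution of the closed unit interval that interchanges the endpoints, $t=0\leftrightarrow u=1$ and $t=1\leftrightarrow u=0$, so a path running from $0$ to $1$ in $t$ becomes a path running from $1$ to $0$ in $u$. Next I would compute the pullbacks of the two one-forms occurring in \eqref{integ-exp}. Using $dt=-2(1+u)^{-2}du$, together with $1+t^2=2(1+u^2)(1+u)^{-2}$ and $t=(1-u)/(1+u)$, a short calculation gives
\[ \frac{dt}{t}\longmapsto -\frac{2\,du}{1-u^2},\qquad \frac{2\,dt}{1+t^2}\longmapsto-\frac{2\,du}{1+u^2}. \]
Thus, up to sign, the ``filler'' form $dt/t$ turns into $2\,du/(1-u^2)$ while the ``marking'' form $2\,dt/(1+t^2)$ turns into $2\,du/(1+u^2)$; this is exactly the interchange of the two types of one-form that underlies duality, entirely parallel to the $T$-value computation of \cite{KT2020-Tsukuba}.

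Having pulled back the integrand, I would reassemble the iterated integral. Because the substitution reverses the orientation of the path, I would apply the path-reversal rule $\int_{1\to 0}\eta_1\circ\cdots\circ\eta_k=(-1)^k\int_{0\to1}\eta_k\circ\cdots\circ\eta_1$, which reverses the order of the $k=k_1+\cdots+k_r$ forms and introduces a factor $(-1)^k$. At the same time, each of the $k$ pulled-back forms carries its own factor $-1$ from the display above, contributing a second factor $(-1)^k$. The point I want to emphasize is that these two signs cancel exactly, so the resulting identity carries no overall sign; I would double-check this cancellation on the smallest admissible cases, e.g. $\TT(2)$ and $\TT(3)$, both of which are manifestly positive while the corresponding dual integrals are integrals of forms positive on $(0,1)$, as a guard against a parity slip. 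The outcome is that $\TT(k_1,\ldots,k_r)$ equals the integral of the reversed word in which every $2\,dt/(1+t^2)$ has become $2\,du/(1+u^2)$ and every $dt/t$ has become $2\,du/(1-u^2)$.

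It then remains to identify this reversed, letter-exchanged word with the word on the right of \eqref{integ-exp2}. Writing $A=2\,du/(1+u^2)$ and $B=2\,du/(1-u^2)$, the word attached to $(k_1,\ldots,k_r)$ reads $AB^{k_1-1}\cdots AB^{k_r-1}$, and after reversal it becomes $B^{k_r-1}A\cdots B^{k_1-1}A$. This ``reverse and swap the marking/filler roles'' operation on words is precisely the combinatorial rule defining the dual index in \cite[\S3.1]{KT2020-Tsukuba}, so the reversed word coincides with $BA^{l_1-1}\cdots BA^{l_s-1}$, the word of the dual index $(l_1,\ldots,l_s)$; I would invoke that definition rather than re-derive it, after checking a couple of instances such as the self-dual $(2,2)$ and the pair $(3)\leftrightarrow(1,2)$. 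Finally I would note that the hypothesis $k_r>1$ guarantees admissibility, which is preserved under duality ($l_s>1$) and which ensures both that the reversed word genuinely begins with $B=2\,du/(1-u^2)$ and that no boundary regularization is required for the substitution. I expect the only real obstacle to be the careful bookkeeping of the two cancelling sign factors together with the precise matching of the word combinatorics to the established definition of the dual index; the rest is a direct change of variables.
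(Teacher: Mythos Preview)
Your proposal is correct and follows exactly the approach indicated in the paper, which simply states that the corollary follows from \eqref{integ-exp} by the change of variables $t\to(1-u)/(1+u)$. Your careful verification of the pullbacks of the two one-forms, the cancellation of the two factors $(-1)^k$ from the minus signs and from path reversal, and the identification of the reversed word with that of the dual index are precisely the details one needs to fill in.
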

Furthermore, by the series expressions~\eqref{ttilde-def} and~\eqref{Def-A-polylog}, we see that
\begin{equation}\label{TtbyA}
 \TT(k_1,\ldots,k_r)=i^{-r}A(k_1,\ldots,k_r;i)\quad (i=\sqrt{-1}), 
 \end{equation}
and hence we have yet another iterated integral representation of $\TT(k_1,\ldots,k_r)$:
\begin{align}\label{integ-exp3}
&\TT(k_1,\ldots,k_r) \nonumber\\
&=i^{-r} \int_0^i\frac{2dt}{1-t^2}\circ\underbrace{\frac{dt}{t}\circ\cdots\circ\frac{dt}{t}}_{k_1-1}\circ
\frac{2dt}{1-t^2}\circ\underbrace{\frac{dt}{t}\circ\cdots\circ\frac{dt}{t}}_{k_2-1}\circ\cdots\cdots\circ
\frac{2dt}{1-t^2}\circ\underbrace{\frac{dt}{t}\circ
\cdots\circ\frac{dt}{t}}_{k_r-1}. 
\end{align}

\subsection{The space of multiple $\TT$-values}\label{subsec:space}

As usual, let us consider the $\mathbb{Q}$-vector space 
\[ \TTcal=\sum_{k=0}^\infty\,\TTcal_k\] spanned by all multiple $\TT$-values, where
\[  \TTcal_0=\mathbb{Q},\quad 
\TTcal_{k}=\sum_{1\leq r \leq k \atop {{k_1,\ldots,k_{r}\geq 1 \atop k_1+\cdots+k_r=k}}}
\mathbb{Q}\cdot \TT(k_1,\ldots,k_r)\quad (k\geq 1). 
\]

\begin{prop}  The space $\TTcal$ is a $\Q$-algebra under the usual multiplication of real numbers.
\end{prop}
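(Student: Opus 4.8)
The plan is to deduce closure under multiplication from the shuffle product of iterated integrals, using the representation \eqref{integ-exp}. Write $\omega_0=dt/t$ and $\omega_1=2dt/(1+t^2)$. By the Proposition, every multiple $\TT$-value is an iterated integral $\int_0^1 w$ of a word $w$ in the two letters $\omega_0,\omega_1$, the block decomposition
\[
\omega_1\underbrace{\omega_0\cdots\omega_0}_{k_1-1}\,\omega_1\underbrace{\omega_0\cdots\omega_0}_{k_2-1}\cdots\,\omega_1\underbrace{\omega_0\cdots\omega_0}_{k_r-1}\ \longleftrightarrow\ \TT(k_1,\ldots,k_r)
\]
giving a bijection between words beginning with $\omega_1$ and multiple $\TT$-values (the empty word corresponding to $1\in\TTcal_0$). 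First I would record the convergence criterion: a word in $\{\omega_0,\omega_1\}$ yields a convergent iterated integral over $[0,1]$ if and only if it begins with $\omega_1$. Indeed, the only singularity of $\omega_0$ is at the lower endpoint $t=0$, which forces the innermost form to be $\omega_1$; at the upper endpoint $t=1$ both forms are regular, since $t$ and $1+t^2$ are nonzero there, so no condition on the last letter is needed. This is exactly why $\TT(k_1,\ldots,k_r)$ converges even when $k_r=1$, and it identifies the admissible words with precisely the words starting with $\omega_1$.

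Next I would invoke the shuffle product formula for iterated integrals along a common path: for words $w_1,w_2$,
\[
\left(\int_0^1 w_1\right)\left(\int_0^1 w_2\right)=\int_0^1\bigl(w_1\sh w_2\bigr),
\]
where $w_1\sh w_2$ is the formal sum of all shuffles of the two words. The key observation is that shuffling preserves the leading letter: every word occurring in $w_1\sh w_2$ begins with the first letter of $w_1$ or the first letter of $w_2$. Hence if both $w_1$ and $w_2$ begin with $\omega_1$, i.e.\ if both integrals are multiple $\TT$-values, then every term of $w_1\sh w_2$ again begins with $\omega_1$ and is therefore itself a multiple $\TT$-value. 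Consequently the product of two multiple $\TT$-values expands as a $\Z_{\ge0}$-linear combination of multiple $\TT$-values, of total weight equal to the sum of the two weights, and this combination lies in $\TTcal$.

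This shows $\TTcal$ is closed under multiplication. Since $\TTcal$ is by construction a $\Q$-vector space containing the unit $1\in\TTcal_0=\Q$ (the empty iterated integral), it is a commutative $\Q$-algebra, with multiplication compatible with the weight. The one point I would present carefully, and which I regard as the only genuine obstacle, is the validity of the shuffle formula itself: it requires that all the iterated integrals involved converge and are taken along the same path from $0$ to $1$. Both are settled by the endpoint analysis above, the regularity of $\omega_1$ at $t=1$ being the feature of the level-four setting that removes the ``admissibility at the top'' constraint familiar from ordinary multiple zeta values.
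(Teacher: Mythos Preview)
Your proposal is correct and takes essentially the same approach as the paper, which simply invokes the iterated integral expression \eqref{integ-exp} and the shuffle product rule in one line. Your write-up usefully spells out the endpoint analysis and the fact that shuffles of words beginning with $\omega_1$ again begin with $\omega_1$, points the paper leaves implicit.
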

\begin{proof}
This is a standard consequence of the integral expression~\eqref{integ-exp}, the product being
described by the {\it shuffle product} rule.
\end{proof}

\begin{example}
By~\eqref{integ-exp}, the shuffle product of $\TT$-values takes exactly the same form
as in the case of multiple zeta (and $T$-) values. In our case, the $\TT$-values for non-admissible indices 
also converge and we do not need any regularization procedure. For instance, we have
\begin{align*}
\TT(1)\TT(2)&=2\TT(1,2)+\TT(2,1),\\
\TT(2)^2&=4\TT(1,3)+2\TT(2,2).
\end{align*}
\end{example}
 
The first natural question would be the dimension $d_k$ 
over $\Q$ of each subspace $\TTcal_k$ of weight $k$ elements.  
We have conducted numerical experiments with Pari-GP,
and obtained the following conjectural table.

\begin{center}
\begin{tabular}{|c|c|c|c|c|c|c|c|c|c|c|c|c|c|c|c|c|c|} \hline
$k$ & 0&1&2& 3& 4& 5& 6& 7& 8& 9& 10& 11\\ 
\hline  
$d_k$ & 1& 1& 2& 3& 6& 8& 16& 22& 44& 59& 118& 162\\ 
\hline
$d_{k,ev}$ & 1& 0& 1& 2& 3& 4& 8& 12& 22& 30& 59& 84\\ 
\hline
$d_{k,od}$ & 0& 1& 1& 1& 3& 4& 8& 10& 22& 29& 59& 78\\ 
\hline
\end{tabular}
\end{center}
\vspace{7pt}
It seems that the space of multiple $\TT$-values of {\it even} depth and that of {\it odd} depth
are disjoint. In the table above we also display the conjectural dimensions $d_{k,ev}$ and $d_{k,od}$ of the spaces of even and odd
depth multiple $\TT$-values of weight $k$ respectively.  As far as the table made by numerical experiments up to weight $11$ goes,
the predicted relation $d_k=d_{k,ev}+d_{k,od}$ holds.
We can also read off the relation $d_k=2d_{k-1}$ and $d_{k,ev}=d_{k,od}$ if $k>1$ is even in this range, 
but we are not aware of any reason to believe that this holds true in general.

Through the numerical experiments, we found it very likely that the multiple $T$-values lie in the space $\TTcal$ 
of multiple $\TT$-values and moreover in the subspace spanned by {\it even} depth multiple $\TT$-values.
This prediction, first announced as a conjecture in a conference, was confirmed soon after (independently) by  
R.~Umezawa and M.~Hirose. They pointed out that, essentially, this follows from the path-composition formula for the iterated integrals.
We give here seemingly the most direct proof using that formula.

\begin{theorem}[Umezawa~\cite{U}]\label{Teven} 
Any multiple $T$-value can be written as a linear combination of multiple $\TT$-values 
of even depth.
\end{theorem}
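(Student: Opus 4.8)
The plan is to run everything through the single word picture behind \eqref{T-integ-exp} and \eqref{integ-exp3}. Writing $\omega=\frac{dt}{t}$ and $\Omega=\frac{2dt}{1-t^2}$, a multiple $T$-value is the iterated integral $\int_0^1 W$ of a word $W$ in $\omega,\Omega$ that begins with $\Omega$ and ends with $\omega$ (since $k_r\ge2$), whereas the \emph{same} word integrated from $0$ to $i$ gives, by \eqref{integ-exp3} and \eqref{TtbyA}, the value $i^{\,r}\TT(k_1,\dots,k_r)$. First I would split the path as $[0,1]=[0,i]\cdot[i,1]$ and apply the path-composition formula for iterated integrals, obtaining
\[ T(k_1,\dots,k_r)=\int_0^1 W=\sum_{W=UV}\Big(\int_0^i U\Big)\Big(\int_i^1 V\Big), \]
the sum ranging over all deconcatenations $W=UV$. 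Because $W$ starts with $\Omega$ (regular at $0$) and ends with $\omega$ (regular at $1$), and $i$ is a regular point of both forms, every factor is honestly convergent, so only a mild tangential-basepoint check is needed to license the splitting.

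The first leg is immediate: by the substitution $t=iu$ already used for \eqref{integ-exp3}, one gets $\int_0^i U=i^{\,r(U)}\TT(\mathrm{ind}(U))$, a multiple $\TT$-value of depth $r(U)=$ (number of $\Omega$'s in $U$), non-admissible prefixes causing no trouble since all $\TT$-values converge. For the second leg I would introduce the M\"obius substitution $t=\dfrac{u+i}{1+iu}$, which carries $u\colon0\to1$ to $t\colon i\to1$ and pulls the two forms back to
\[ \frac{2dt}{1-t^2}=\frac{2du}{1-u^2},\qquad \frac{dt}{t}=-i\,\frac{2du}{1+u^2}. \]
Hence $\int_i^1 V=(-i)^{q(V)}\int_0^1\widetilde V$, where $q(V)$ is the number of $\omega$'s in $V$ and $\widetilde V$ is the corresponding word in $\frac{2du}{1-u^2},\frac{2du}{1+u^2}$. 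A second substitution $u=(1-v)/(1+v)$ (the one behind \eqref{integ-exp2}) turns $\int_0^1\widetilde V$ into $\pm$ the integral of a word in $\frac{dv}{v},\frac{2dv}{1+v^2}$ that begins with $\frac{2dv}{1+v^2}$, i.e. into a single multiple $\TT$-value by \eqref{integ-exp}; counting letters shows its depth equals the number of $\frac{2du}{1+u^2}$'s in $\widetilde V$, namely $q(V)$.

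Combining, $T(k_1,\dots,k_r)=\sum_{W=UV}(-1)^{q(V)}\,i^{\,r(U)+q(V)}\,\TT_U\,\Theta_V$, a bilinear expression in the genuine real $\TT$-values $\TT_U$ and $\Theta_V$ of depths $r(U)$ and $q(V)$. The decisive step is then a reality argument: since $T$ and every multiple $\TT$-value are real, taking real parts kills all terms with $r(U)+q(V)$ odd and leaves
\[ T(k_1,\dots,k_r)=\sum_{r(U)+q(V)\ \mathrm{even}}(\pm1)\,\TT_U\,\Theta_V. \]
Finally each surviving product $\TT_U\Theta_V$ expands, via the shuffle product that makes $\TTcal$ a $\Q$-algebra, into multiple $\TT$-values of depth $r(U)+q(V)$, which is even; hence $T(k_1,\dots,k_r)$ lies in the span of even-depth multiple $\TT$-values. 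As a check, for $W=\frac{2dt}{1-t^2}\circ\frac{dt}{t}$ the three deconcatenations give $-i\TT(2)+\TT(1)^2+i\TT(2)=\TT(1)^2=2\,\TT(1,1)$ by \eqref{11111}, recovering $T(2)=2\TT(1,1)$.

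The main obstacle is not the reality/parity cancellation—which is the clean heart of the argument—but the second leg: verifying that $\int_i^1 V$ is again a single $\TT$-value and, crucially, that its depth is exactly the $\omega$-count $q(V)$. This is what makes the surviving parities $r(U)+q(V)$ coincide with the depth $r(U)+q(V)$ of the output, and it hinges on the interplay of the two substitutions together with the duality behind \eqref{integ-exp2}. The attendant signs, the handling of non-admissible words, and the precise depth bookkeeping are exactly where the care is required.
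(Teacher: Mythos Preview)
Your argument is correct and is essentially the paper's own proof: the same path splitting $[0,1]=[0,i]\cup[i,1]$ via the composition formula, the same M\"obius substitution on the second leg (your $t=\frac{u+i}{1+iu}$ is exactly the paper's $t=\frac{-iu+1}{u-i}$), and the same reality/parity argument at the end. The only cosmetic difference is that where you perform the second change $u=\frac{1-v}{1+v}$ explicitly to land on \eqref{integ-exp}, the paper invokes the already-established dual representation \eqref{integ-exp2}, which encodes that very substitution.
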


\begin{proof}
We use the formula \eqref{T-integ-exp}: 
\begin{equation*} 
T(k_1,\ldots,k_r)= \int_0^1 \Omega_1\circ \Omega_2\circ \cdots\cdots \circ\Omega_k,
\end{equation*}
where $\Omega_j=2dt_j/(1-t_j^2)$ if $j\in\{1,k_1+1,\ldots,k_1+\cdots+k_{r-1}+1\}$ and $\Omega_j=dt_j/t_j$
otherwise.  Now, by decomposing the path of integration into two paths, first from 0 to $i=\sqrt{-1}$ and from $i$ to 1,
and using the path composition formula for iterated integrals (see for instance \cite[Prop.~1.5.1]{Chen}), we may write this as
\[ T(k_1,\ldots,k_r)=\sum_{j=0}^k \left(\int_0^i \Omega_1\circ \cdots \circ\Omega_j\right)\left(\int_i^1 \Omega_{j+1}\circ
\cdots\circ\Omega_k\right). \]
Suppose that $\int_0^i \Omega_1\circ \cdots \circ\Omega_j$ has depth $l$, i.e., there are $l$ $2dt/(1-t^2)$'s 
among $ \Omega_1, \ldots, \Omega_j$. Then by \eqref{integ-exp3}, $\int_0^i \Omega_1\circ \cdots \circ\Omega_j$
is equal to  $i^l$ times a multiple $\TT$-value of depth $l$.   
  
On the other hand, by changing the variables $t\to (-iu+1)/(u-i)$, we see that the integral  
$\int_i^1 \Omega_{j+1}\circ\cdots\circ\Omega_k$ is equal to $i^{-(k-j-r+l)}$ times a real iterated integral
$\int_0^1 \Omega_{j+1}'\circ\cdots\circ\Omega_k'$, where $\Omega_h'=2du/(1-u^2)$ if 
$\Omega_h=2dt/(1-t^2)$ and $\Omega_h'=2du/(1+u^2)$ if $\Omega_h=dt/t$.
Since the number of $2du/(1-u^2)$ (depth) among $\Omega_h'$  is $r-l$ and the total number (weight) is $k-j$,
by \eqref{integ-exp2}, we conclude that the integral $\int_i^1 \Omega_{j+1}\circ\cdots\circ\Omega_k$ is equal to $i^{-(k-j-r+l)}$ times 
a multiple $\TT$-value of weight $k-j$ and depth $k-j-r+l$. Therefore, we conclude that
the product $\left(\int_0^i \Omega_1\circ \cdots \circ\Omega_j\right)\left(\int_i^1 \Omega_{j+1}\circ
\cdots\circ\Omega_k\right)$ is $i^{-k+j+r}$ times a product of multiple $\TT$-values of
depths $l$ and $k-j-r+l$, which is by the shuffle product  $i^{-k+j+r}$ times a sum of multiple $\TT$-values of
depth $k-j-r+2l$.  Since $T(k_1,\ldots,k_r)$ is real and the power $-k+j+r$ of $i$ and the depth $k-j-r+2l$
have the same parity, we conclude that the $T$-value $T(k_1,\ldots,k_r)$ is a sum of $\TT$-values of
even depth.
\end{proof}

\begin{example} Consider the `height one' multiple $T$-values, i.e., $T$-values of the form $T(\underbrace{1,\ldots,1}_{r-1},k+1)$ with $r,k\ge1$. 
Starting with the iterated integral expression \eqref{T-integ-exp} and proceeding as in the above proof, we have
\begin{align*}
&T(\underbrace{1,\ldots,1}_{r-1},k+1) 
=\int_0^1\underbrace{\frac{2dt}{1-t^2}\circ\cdots\circ\frac{2dt}{1-t^2}}_{r}\circ
\underbrace{\frac{dt}{t}\circ\cdots\circ\frac{dt}{t}}_{k}\\
&=\left(\int_0^i\underbrace{\frac{2dt}{1-t^2}\circ\cdots\circ\frac{2dt}{1-t^2}}_{r}\right)
\left(\int_i^1 \underbrace{\frac{dt}{t}\circ\cdots\circ\frac{dt}{t}}_{k}\right)\\
&+\sum_{j=1}^r \left(\int_0^i\underbrace{\frac{2dt}{1-t^2}\circ\cdots\circ\frac{2dt}{1-t^2}}_{r-j}\right)
\left(\int_i^1\underbrace{\frac{2dt}{1-t^2}\circ\cdots\circ\frac{2dt}{1-t^2}}_{j}\circ
\underbrace{\frac{dt}{t}\circ\cdots\circ\frac{dt}{t}}_{k}\right)\\
&+\sum_{j=1}^k \left(\int_0^i\underbrace{\frac{2dt}{1-t^2}\circ\cdots\circ\frac{2dt}{1-t^2}}_{r}\circ
\underbrace{\frac{dt}{t}\circ\cdots\circ\frac{dt}{t}}_{j}\right)
\left(\int_i^1 \underbrace{\frac{dt}{t}\circ\cdots\circ\frac{dt}{t}}_{k-j}\right).
\end{align*}
Now, by~\eqref{integ-exp3}, we have 
\begin{align*}
\int_0^i\underbrace{\frac{2dt}{1-t^2}\circ\cdots\circ\frac{2dt}{1-t^2}}_{r-j}
&=i^{r-j} \TT(\underbrace{1,\ldots,1}_{r-j}),\\
\int_0^i\underbrace{\frac{2dt}{1-t^2}\circ\cdots\circ\frac{2dt}{1-t^2}}_{r}\circ
\underbrace{\frac{dt}{t}\circ\cdots\circ\frac{dt}{t}}_{j}
&=i^r \TT(\underbrace{1,\ldots,1}_{r-1},j+1),
\end{align*}
and by the change of variable $t\to (-iu+1)/(u-i)$
\begin{align*}
\int_i^1\underbrace{\frac{2dt}{1-t^2}\circ\cdots\circ\frac{2dt}{1-t^2}}_{j}\circ
\underbrace{\frac{dt}{t}\circ\cdots\circ\frac{dt}{t}}_{k}
&=i^{-k}\int_0^1\underbrace{\frac{2du}{1-u^2}\circ\cdots\circ\frac{2du}{1-u^2}}_{j}\circ
\underbrace{\frac{2du}{1+u^2}\circ\cdots\circ\frac{2du}{1+u^2}}_{k}\\
&=i^{-k} \TT(\underbrace{1,\ldots,1}_{k-1},j+1),\\
\int_i^1\underbrace{\frac{dt}{t}\circ\cdots\circ\frac{dt}{t}}_{k-j}
&=i^{-k+j} \int_0^1\underbrace{\frac{2du}{1+u^2}\circ\cdots\circ\frac{2du}{1+u^2}}_{k-j}\\
&=i^{-k+j} \TT(\underbrace{1,\ldots,1}_{k-j}).
\end{align*}
Putting all these together and using the shuffle product formula
\[ \TT(\underbrace{1,\ldots,1}_{r})
\TT(\underbrace{1,\ldots,1}_{k})=\binom{r+k}{r}\TT(\underbrace{1,\ldots,1}_{r+k}), \] we have
\begin{align}
&T(\underbrace{1,\ldots,1}_{r-1},k+1) = i^{r-k} \binom{r+k}{r}\TT(\underbrace{1,\ldots,1}_{r+k}) \nonumber \\
&\qquad+ i^{r-k}  \sum_{j=1}^r i^{-j} \TT(\underbrace{1,\ldots,1}_{r-j}) \TT(\underbrace{1,\ldots,1}_{k-1},j+1)
+ i^{r-k}  \sum_{j=1}^k i^{j} \TT(\underbrace{1,\ldots,1}_{k-j}) \TT(\underbrace{1,\ldots,1}_{r-1},j+1).\label{TbyTtilde}
\end{align}
Taking the real and imaginary parts of this equation, we have the following set of relations.

When $r+k$ is even, 
\begin{align}
&(-1)^{(k-r)/2}\,T(\underbrace{1,\ldots,1}_{r-1},k+1) =\binom{r+k}{r}\TT(\underbrace{1,\ldots,1}_{r+k}) \nonumber\\
&\quad + \sum_{j=1\atop j:even}^r (-1)^{j/2}\, \TT(\underbrace{1,\ldots,1}_{r-j}) \TT(\underbrace{1,\ldots,1}_{k-1},j+1)
+ \sum_{j=1\atop j:even}^k (-1)^{j/2}\, \TT(\underbrace{1,\ldots,1}_{k-j}) \TT(\underbrace{1,\ldots,1}_{r-1},j+1),\\
&\qquad\sum_{j=1\atop j:odd}^r (-1)^{(j+1)/2} \,\TT(\underbrace{1,\ldots,1}_{r-j}) \TT(\underbrace{1,\ldots,1}_{k-1},j+1)\nonumber\\
&\qquad\qquad\qquad + \sum_{j=1\atop j:odd}^k (-1)^{(j-1)/2}\, \TT(\underbrace{1,\ldots,1}_{k-j}) \TT(\underbrace{1,\ldots,1}_{r-1},j+1)=0,
\end{align}
and when $r+k$ is odd,
\begin{align}
(-1)^{(k-r-1)/2}\,T(\underbrace{1,\ldots,1}_{r-1},k+1)  &= \sum_{j=1\atop j:odd}^r (-1)^{(j+1)/2}\, \TT(\underbrace{1,\ldots,1}_{r-j}) \TT(\underbrace{1,\ldots,1}_{k-1},j+1) \nonumber \\
&+ \sum_{j=1\atop j:odd}^k (-1)^{(j-1)/2}\, \TT(\underbrace{1,\ldots,1}_{k-j}) \TT(\underbrace{1,\ldots,1}_{r-1},j+1),
\end{align}
\begin{align}
\binom{r+k}{r}\TT(\underbrace{1,\ldots,1}_{r+k})&+\sum_{j=1\atop j:even}^r (-1)^{j/2}\, \TT(\underbrace{1,\ldots,1}_{r-j}) \TT(\underbrace{1,\ldots,1}_{k-1},j+1)\nonumber \\
&\qquad + \sum_{j=1\atop j:even}^k (-1)^{j/2}\, \TT(\underbrace{1,\ldots,1}_{k-j}) \TT(\underbrace{1,\ldots,1}_{r-1},j+1)=0.
\end{align}

For small values of $r$ and $k$ with $r+k\le 4$, we obtain from these, together with the shuffle product, the followings:
\begin{align*}
T(2)&=2\TT(1,1),\\
T(3)&=T(1,2)=\TT(1,2)+\TT(2,1),\\
\TT(3)&=3\TT(1,1,1),\\
T(4)&=T(1,1,2)=2\TT(1,3)+\TT(2,2)+\TT(3,1)-4\TT(1,1,1,1),\\
\TT(4)&=2\TT(1,1,2)+2\TT(1,2,1)+\TT(2,1,1),\\
T(1,3)&=-2\TT(1,3)+6\TT(1,1,1,1).
\end{align*}

The particular case of $r=1$ (multiplied by $i^k$) of \eqref{TbyTtilde} will be used later in the proof of Theorem~\ref{Ent-rel}:
\begin{align}\label{key}
i^kT(k+1)&=i(k+1)\TT(\underbrace{1,\ldots,1}_{k+1})+\TT(\underbrace{1,\ldots,1}_{k-1},2)
+\sum_{j=1}^k i^{j+1} \TT(\underbrace{1,\ldots,1}_{k-j}) \TT(j+1).
\end{align}
\end{example}

\subsection{Height one multiple $\TT$-values}\label{subsec:ht1}

As another application of the iterated integral expression \eqref{integ-exp}, 
we can compute the generating function of `height one' multiple $\TT$-values. 
Analogous results are known in the case of multiple zeta values as well as multiple $T$-values (\cite{Ao, Dr, KT2020-Tsukuba}):
\[ 1-\sum_{m,n=1}^\infty \zeta(\underbrace{1,\ldots,1}_{n-1},m+1)X^m Y^n
=\frac{\Gamma(1-X)\Gamma(1-Y)}{\Gamma(1-X-Y)}={}_2F_1(X,Y;1;1)^{-1}\]
and
\[  1-\!\!\sum_{m,n=1}^\infty T({1,\ldots,1}_{n-1},m+1)X^m Y^n=
\frac{2\,\Gamma(1-X)\Gamma(1-Y)}{\Gamma(1-X-Y)}{}_2F_1(1-X,1-Y;1-X-Y;-1), \]
where ${}_2F_1(a,b;c;z)$ is the Gauss hypergeometric function.
In contrast to these cases, now the Appell hypergeometric function $F_1$ emerges.

\begin{theorem}\label{ht1gen}
We have 
\[ \sum_{m, n\ge1}\TT(\underbrace{1,\cdots,1}_{n-1},m)X^{m-1}Y^{n-1}
=\frac2{1-X}F_1(1-X;1-iY,1+iY;2-X;i,-i), \]
where $F_1$ stands for the Appell hypergeometric function
\[ F_{1}(a;b_{1},b_{2};c;x,y)=\sum _{m,n=0}^{\infty }{\frac {(a)_{m+n}(b_{1})_{m}(b_{2})_{n}}{(c)_{m+n}\,m!\,n!}}\,x^{m}y^{n}. \]
\end{theorem}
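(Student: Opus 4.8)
The plan is to package the height-one values into a single-variable generating function and convert the resulting iterated-integral identity into an ordinary differential equation. Concretely, I would set
\[
\phi(z)=\sum_{m,n\ge1}X^{m-1}Y^{n-1}\,I_{m,n}(z),\qquad
I_{m,n}(z)=\int_0^z\underbrace{\frac{2dt}{1+t^2}\circ\cdots\circ\frac{2dt}{1+t^2}}_{n}\circ\underbrace{\frac{dt}{t}\circ\cdots\circ\frac{dt}{t}}_{m-1},
\]
so that, by the Proposition (formula \eqref{integ-exp}), $I_{m,n}(1)=\TT(\underbrace{1,\ldots,1}_{n-1},m)$ and hence $\phi(1)$ is exactly the sum to be evaluated. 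Differentiating termwise in $z$ and peeling off the outermost $1$-form (which is $dt/t$ when $m\ge2$ and $2dt/(1+t^2)$ when $m=1$, with $I_{1,0}(z)=1$) gives the recursion
\[
\frac{d}{dz}I_{m,n}(z)=\frac1z\,I_{m-1,n}(z)\ (m\ge2),\qquad
\frac{d}{dz}I_{1,n}(z)=\frac{2}{1+z^2}\,I_{1,n-1}(z),
\]
which is just the level-4 analogue of \eqref{deriv-a-multi}.

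Summing these, the $m\ge2$ contribution reassembles $\tfrac{X}{z}\phi(z)$, while the $m=1$ contribution produces $\tfrac{2}{1+z^2}\sum_{p\ge0}Y^{p}I_{1,p}(z)$. The key simplification is that $I_{1,p}(z)=\tfrac1{p!}\bigl(\int_0^z\tfrac{2dt}{1+t^2}\bigr)^{p}=\tfrac1{p!}(2\arctan z)^{p}$, being the $p$-fold iterated integral of one and the same $1$-form, so that $\sum_{p\ge0}Y^{p}I_{1,p}(z)=e^{2Y\arctan z}$. This yields the first-order linear ODE
\[
\frac{d}{dz}\phi(z)-\frac{X}{z}\phi(z)=\frac{2}{1+z^2}\,e^{2Y\arctan z}.
\]
Multiplying by the integrating factor $z^{-X}$ and integrating from $0$ (where the boundary term vanishes, since $\phi(z)=2\arctan z+O(z^2)\sim2z$ forces $z^{-X}\phi(z)\to0$ for $\mathrm{Re}\,X<1$), I get
\[
\phi(z)=z^{X}\int_0^z\frac{2s^{-X}}{1+s^2}\,e^{2Y\arctan s}\,ds,
\qquad\text{so that}\qquad
\phi(1)=2\int_0^1\frac{s^{-X}}{1+s^2}\,e^{2Y\arctan s}\,ds.
\]

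The final step is to recognize this real integral as an Euler integral for $F_1$. The crucial branch identity is $e^{2Y\arctan s}=\bigl(\tfrac{1-is}{1+is}\bigr)^{iY}=(1-is)^{iY}(1+is)^{-iY}$, valid because $\tfrac{1-is}{1+is}=e^{-2i\arctan s}$ has modulus one for $s\in[0,1]$; combined with $\tfrac1{1+s^2}=(1-is)^{-1}(1+is)^{-1}$ this rewrites the integrand as $s^{-X}(1-is)^{iY-1}(1+is)^{-iY-1}$. Comparing with the classical representation
\[
F_1(a;b_1,b_2;c;x,y)=\frac{\Gamma(c)}{\Gamma(a)\Gamma(c-a)}\int_0^1 t^{a-1}(1-t)^{c-a-1}(1-xt)^{-b_1}(1-yt)^{-b_2}\,dt
\]
with $a=1-X$, $c=2-X$ (so $c-a=1$ and the prefactor equals $1-X$), $b_1=1-iY$, $b_2=1+iY$, $x=i$, $y=-i$ then gives the claimed formula after multiplying by $2$; note the integral converges at $t=0$ for $\mathrm{Re}\,X<1$ and has no singularity on $[0,1]$ since $1\mp it$ never vanishes there. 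The steps requiring most care are the justification of termwise differentiation and of the boundary evaluation at $z=1$ (an analytic-continuation argument using that the $\TT$-values converge even for the nonadmissible last-entry-one indices), and above all the correct choice of branch in $e^{2Y\arctan s}=(1-is)^{iY}(1+is)^{-iY}$; once the powers of $\pm i$ and the logarithm branch are pinned down on $[0,1]$, matching the parameters of $F_1$ is routine.
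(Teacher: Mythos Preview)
Your argument is correct and lands on exactly the same single integral
\[
\phi(1)=2\int_0^1 t^{-X}(1-it)^{iY-1}(1+it)^{-iY-1}\,dt
\]
that the paper obtains, after which the identification with $F_1$ is identical. The route to this integral, however, is organized differently. The paper never introduces an auxiliary endpoint $z$ nor an ODE: it simply observes that in the iterated integral for $\TT(\underbrace{1,\ldots,1}_{n-1},m)$ the first $n$ forms are all $\tfrac{2dt}{1+t^2}$ and the last $m-1$ are all $\tfrac{dt}{t}$, so the inner block collapses to $\tfrac{1}{(n-1)!}\bigl(2\arctan t\bigr)^{n-1}$ and the outer block to $\tfrac{1}{(m-1)!}\bigl(\log\tfrac1t\bigr)^{m-1}$, leaving a single integral over the ``middle'' variable; summing over $m,n$ then exponentiates both factors at once. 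Your ODE approach uses the same collapsing trick only for the $\tfrac{2dt}{1+t^2}$ block (in computing $I_{1,p}$) and replaces the $\tfrac{dt}{t}$ collapse by solving a first-order linear equation with integrating factor $z^{-X}$. The paper's method is shorter and avoids the analytic-continuation and boundary issues you flag at $z=0,1$; your method is more algorithmic and would adapt more readily if the two blocks of $1$-forms were not each of a single type.
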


\begin{proof}
From the integral expression~\eqref{integ-exp}, we have
\begin{align*}
\TT(\underbrace{1,\ldots,1}_{n-1},m)&=\underset{0<t_1<\cdots<t_n<u_1<\cdots<u_{m-1} <1}{\int\cdots\int}
\frac{2dt_1}{1+t_1^2}\cdots\frac{2dt_n}{1+t_n^2}\frac{du_1}{u_1}\cdots\frac{du_{m-1}}{u_{m-1}}  \\
&=\int_0^1\left(\frac1{(n-1)!}\left(\int_0^{t_n}\frac{2dt}{1+t^2}\right)^{n-1}\cdot\frac1{(m-1)!}\left(\int_{t_n}^1\frac{du}{u}\right)^{m-1}\right)
\frac{2dt_n}{1+t_n^2}.
\end{align*}
Here, by using 
\[ \int_0^{t_n}\frac{2dt}{1+t^2}=\frac1{i}\log\left(\frac{1+it_n}{1-it_n}\right)\quad(\text{principal value, }0\le t_n\le1), \]
we obtain 
\[ \TT(\underbrace{1,\ldots,1}_{n-1},m)=\frac{i^{1-n}}{(n-1)!(m-1)!}\int_0^1\left(\log\left(\frac{1+it}{1-it}\right)\right)^{n-1}
\left(\log\frac1t\right)^{m-1}\frac{2dt}{1+t^2} \]
and hence 
\begin{align*}
&\sum_{m,n\ge1}\TT(\underbrace{1,\ldots,1}_{n-1},m)X^{m-1}Y^{n-1}\\
&=\int_0^1\left(\sum_{n\ge1}\left(\log\left(\frac{1+it}{1-it}\right)\right)^{n-1}\frac1{(n-1)!}\left(\frac{Y}{i}\right)^{n-1}\right)
\left(\sum_{m\ge1}\left(\log\frac1t\right)^{m-1}\frac{X^{m-1}}{(m-1)!}\right)\frac{2dt}{1+t^2}\\
&=\int_0^1\left(\frac{1+it}{1-it}\right)^\frac{Y}{i}t^{-X}\frac{2dt}{1+t^2}\\
&=2\int_0^1t^{-X}(1-it)^{iY-1}(1+it)^{-iY-1}dt.
\end{align*}
Recall the integral expression of the Appell hypergeometric series $F_1$ (see {\it e.g.} \cite[Chap.14 (p. 300)]{WW}):
\begin{align*}
& F_1(a;b_1,b_2;c;x,y)
=\sum_{m,n=0}^\infty \frac{(a)_{m+n}(b_1)_m(b_2)_n}{(c)_{m+n}\,m!\,n!} x^my^n\quad(|x|<1,\,|y|<1)\\
&=\frac{\Gamma(c)}{\Gamma(a)\Gamma(c-a)}
\int_0^1t^{a-1}(1-t)^{c-a-1}(1-xt)^{-b_1}(1-yt)^{-b_2}dt\quad(\Re(a)>0,\,\Re(c-a)>0).
\end{align*}
We put 
\[ a=1-X, c=a+1=2-X, b_1=1-iY, b_2=1+iY, x=i,y=-i \]
and obtain 
\begin{align*}
&\int_0^1t^{-X}(1-it)^{iY-1}(1+it)^{-iY-1}dt\\
&=\frac{\Gamma(1-X)\Gamma(1)}{\Gamma(2-X)} 
F_1(1-X;1-iY,1+iY;2-X;i,-i) \\
&=\frac1{1-X}F_1(1-X;1-iY,1+iY;2-X;i,-i).
\end{align*}
From this the result follows.
\end{proof}

\subsection{A relation of multiple $\TT$-values involving Entringer numbers}\label{subsec:Ent}

To state the next theorem, we review the `Entringer number', which counts the number of `down-up'
permutations in the symmetric group $S_{n+1}$ starting with $j+1$.
Alternatively, the Entringer numbers $\{\EE(n,j)\mid n,j\in \mathbb{Z}_{\geq 0},\ 0\leq j\leq n\}$ are
defined inductively by 
\begin{align*}
& \EE(0,0)=1,\quad \EE(n,0)=0\ \ (n>0),\\
& \EE(n,j)=\EE(n,j-1)+\EE(n-1,n-j)\ \ (n\geq 1,\ 1\leq j\leq n).
\end{align*}
Note in particular that $\EE(n,j)\in \mathbb{Z}_{\geq 0}$.  For more details, see Entringer \cite{Entringer1666} and also Stanley \cite{Stanley2010}.

\renewcommand{\arraystretch}{1.1}
\begin{table}[h]
\begin{center}
\begin{tabular} {|c|c|c|c|c|c|c|c|c|c|c|} \hline
\backslashbox{$\ \ n$}{$j\ \ $} & $ 0 $ & $ 1 $ & $ 2 $ & $ 3 $ & $ 4 $ & $ 5 $ & $ 6 $ & $ 7 $
 \\ \hline  
$0$ & $1$ &  &  &  &  &  &  &  
\\ \hline

$1$ & $0$ & $1$ &  &  &  &  &  &  
\\ \hline

$2$ & $0$ & $1$ & $1$ &  &  &  &  &  
\\ \hline

$3$ & $0$ & $1$ & $2$ & $2$ &  &  &  &  
\\ \hline

$4$ & $0$ & $2$ & $4$ & $5$ & $5$ &   &   &  
\\ \hline

$5$ & $0$ & $5$ & $10$ & $14$ & $16$ & $16$ &   &   
\\ \hline

$6$ & $0$ & $16$ & $32$ & $46$ & $56$ & $61$ & $61$  &   
\\ \hline

$7$ & $0$ & $61$ & $122$ & $178$ & $224$ & $256$ & $272$  &  $272$ 
\\ \hline
\end{tabular}
\end{center}
\label{tab:Bnk}
\caption{$\EE(n,j)  \: (0\le n,\,j \leq 7) $}
\end{table}

Incidentally, let $\EE_0=1$ and define 
\begin{equation}
\EE_n=\sum_{j=0}^{n-1}\EE(n-1,j)\  (=\EE(n,n))\quad (n\geq 1).  \label{Euler-2}
\end{equation}
Then $\EE_n$ is sometimes called the `Euler number' (see \cite{Stanley2010})
and is equal to the total number of `down-up' (or `up-down') permutations in $S_n$. 
A generating function of $\EE_n$ is
\begin{equation}
\sec x+\tan x=\sum_{n=0}^\infty \EE_n\frac{x^n}{n!}  \label{gene-Euler-2}
\end{equation}
(see \cite[Theorem 1.1]{Stanley2010}). Hence $(-1)^n\EE_{2n}=E_{2n}$ is the 
usual Euler number defined by
\begin{equation}\label{euler}
 \sum_{n=0}^\infty E_n \frac{x^n}{n!} = \frac1{\cosh x} 
\end{equation} 
(see N\"orlund \cite[Chap.\,2]{N1924}, note that $\cosh x$ is an even function) and the odd-indexed $\EE_{2n+1}$ coincides with the `tangent number' (see \cite{N1924}). 

\begin{table}[h]
\begin{center}
\begin{tabular} {|c|c|c|c|c|c|c|c|c|c|c|c|} \hline
$\ \ n$ & $ 0 $ & $ 1 $ & $ 2 $ & $ 3 $ & $ 4 $ & $ 5 $ & $ 6 $ & $ 7 $ & $ 8 $ & $ 9 $ & $10$
 \\ \hline  

$\ \ \EE_n$ & $1$ & $1$ & $1$ & $2$ & $5$ & $16$ & $61$ & $272$ & $1385$  &  $7936$ & $50521$
\\ \hline
\end{tabular}
\end{center}
\label{tab:En-9}
\caption{$\EE_n  \: (0\le n \leq 10) $}
\end{table}
Using $\EE_n$, the well-known formulas for Riemann zeta values as well as Dirichlet $L$-values of conductor 4
(sometimes referred to as the Dirichlet beta values) can be re-written as formulas for our $T$- and $\TT$-values 
in a uniform manner as 
\begin{equation}
\begin{rcases}
T(n+1) & (n:\,\text{\rm odd}) \\
\TT(n+1) & (n:\,\text{\rm even})
\end{rcases} =\frac{\EE_{n}}{n!}\left(\frac{\pi}{2}\right)^{n+1}
\label{Euler-Ttilde}
\end{equation}
for any $n\in \mathbb{Z}_{\geq 0}$ 
(see Comtet \cite{Comtet1974}).

Coming back to the Entringer number, we have the following curious and beautiful relations.  Note that
the values on the right are not appearing in \eqref{Euler-Ttilde}, {\it i.e.} the `difficult' values presumably
not rational multiples of powers of $\pi$. 

\begin{theorem}\label{Ent-rel}\ \ For any $n\in \mathbb{Z}_{\geq 1}$, we have
\begin{align}
& \sum_{j=1}^{n}\EE(n,j)\TT(\underbrace{1,\ldots,1,\overset{\tiny j}{\check{2}},1,\ldots,1}_{n}) =
\begin{cases}
\widetilde{T}(n+1) & (n:\,\text{\rm odd}),\\
T(n+1) & (n:\,\text{\rm even}).
\end{cases}
\label{Entringer-Ttilde}
\end{align}
\end{theorem}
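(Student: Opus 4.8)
The plan is to prove \eqref{Entringer-Ttilde} for all $n$ at once by comparing generating functions in an auxiliary variable $t$. Write $U(n,j)=\TT(\underbrace{1,\ldots,1}_{j-1},2,\underbrace{1,\ldots,1}_{n-j})$ for the height-one value appearing on the left. First I would feed the index $(1^{j-1},2,1^{n-j})$ into \eqref{integ-exp}: its word is $\underbrace{\alpha\cdots\alpha}_{j}\,\beta\,\underbrace{\alpha\cdots\alpha}_{n-j}$ with $\alpha=2\,dt/(1+t^2)$, $\beta=dt/t$, so integrating out the $j$ innermost and $n-j$ outermost $\alpha$'s and keeping the single $\beta$ at the variable $s$ gives, since $\int_0^s\alpha=2\arctan s$ and $\int_s^1\alpha=\tfrac{\pi}{2}-2\arctan s$,
\[ U(n,j)=\int_0^1\frac{(2\arctan s)^{\,j}}{j!}\,\frac{(\tfrac{\pi}{2}-2\arctan s)^{\,n-j}}{(n-j)!}\,\frac{ds}{s}. \]
Hence, writing $u=2\arctan s$ and $v=\tfrac{\pi}{2}-u$, the left side of \eqref{Entringer-Ttilde} equals $\int_0^1 P_n(s)\,\tfrac{ds}{s}$ with $P_n(s)=\sum_{j=0}^n \EE(n,j)\,\tfrac{u^j}{j!}\tfrac{v^{\,n-j}}{(n-j)!}$ (the term $j=0$ is harmless, as $\EE(n,0)=0$); the integral converges because $P_n(s)=O(s)$ near $0$.

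Next I would sum over $n$. For the bivariate generating function $F(x,y)=\sum_{n\ge0}\sum_{j=0}^n \EE(n,j)\,\tfrac{x^j}{j!}\tfrac{y^{\,n-j}}{(n-j)!}$, the recursion $\EE(n,j)=\EE(n,j-1)+\EE(n-1,n-j)$ reindexes into the functional–differential equation $\partial_x F(x,y)-\partial_y F(x,y)=F(y,x)$; solving it (the diagonal is $F(x,x)=1/(\cos x-\sin x)$, the EGF of the Springer numbers) yields
\[ F(x,y)=\frac{\cos\frac{x-y}{2}+\sin\frac{x-y}{2}}{\cos\frac{x+y}{2}-\sin\frac{x+y}{2}}. \]
Putting $(x,y)=(tu,tv)$ and using $u+v=\tfrac{\pi}{2}$, the homogeneous components give $\sum_{n\ge0}t^n P_n(s)=\cos(2t\arctan s-\chi)/\cos\chi$ with $\chi=\tfrac{(t+1)\pi}{4}$ (one checks the values at $s=0$ and $s=1$ against $P_n(0)=0$ and $P_n(1)=\EE_n(\pi/2)^n/n!$). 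Subtracting the constant $n=0$ term for convergence,
\[ \sum_{n\ge1}t^n\!\Big(\sum_{j=1}^n\EE(n,j)\,U(n,j)\Big)=\int_0^1\Big(\frac{\cos(2t\arctan s-\chi)}{\cos\chi}-1\Big)\frac{ds}{s}. \]

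For the right-hand side I would use $\TT(n+1)=\int_0^1\tfrac{(\log(1/s))^n}{n!}\tfrac{2\,ds}{1+s^2}$ and $T(n+1)=\int_0^1\tfrac{(\log(1/s))^n}{n!}\tfrac{2\,ds}{1-s^2}$ from \eqref{integ-exp} and \eqref{T-integ-exp}. Summing $t^n$ over odd $n$ for $\TT$ and over even $n\ge2$ for $T$, and recognising $\sum_n (t\log(1/s))^n/n!=s^{-t}$, the two halves assemble into
\[ \sum_{n\ge1}t^n\,\mathrm{RHS}_n=\int_0^1\frac{s^{-t}+s^t-2}{1-s^2}\,ds+\int_0^1\frac{s^{-t}-s^t}{1+s^2}\,ds; \]
note that the even/odd split in $t$ automatically produces the case distinction ($n$ even $\to T$, $n$ odd $\to\TT$) of the theorem.

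It then remains to identify the two generating functions, and this is the hard part. Writing $e^{2i\arctan s}=\tfrac{1+is}{1-is}$ turns the cosine integrand on the left into a combination of $\big(\tfrac{1+is}{1-is}\big)^{\pm t}$, i.e. into arc integrals such as $\int_1^{i}\tfrac{2(z^t-1)}{z^2-1}\,dz$. The bridge to the $s^{\pm t}$–integrals on the right is the deformation of this path from the unit–circle arc $1\to i$ to the broken path $1\to0\to i$: the real segment contributes the $s^{t}$ (and, from the conjugate piece, the $s^{-t}$) integrals over $(0,1)$, while the imaginary segment $z=i\sigma$ contributes the $1/(1+\sigma^2)$–integrals. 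This is precisely the $0\to i\to1$ path-splitting underlying \eqref{key}. Reassembling these pieces (using standard evaluations like $\int_0^\infty \tfrac{s^t}{1+s^2}\,ds=\tfrac{\pi}{2}\sec\tfrac{\pi t}{2}$) must reproduce the right-hand side, after which comparing coefficients of $t^n$ gives \eqref{Entringer-Ttilde}. I expect the main obstacle to be exactly this final step: justifying the contour deformation through $i$ and checking that the factor $\tan\chi=\tan\tfrac{(t+1)\pi}{4}$ generated on the left cancels correctly against the clean two-integral expression on the right; by comparison, the closed form for $F(x,y)$ is a routine, purely combinatorial lemma.
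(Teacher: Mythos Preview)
Your approach is genuinely different from the paper's, and it does work, though the step you flag as ``the hard part'' is left as a sketch. The paper proves \eqref{Entringer-Ttilde} by \emph{induction} on $n$: it feeds the induction hypothesis for $\TT(j+1)$ (odd $j<k$) into the real part of the path-composition identity \eqref{key}, expands via the shuffle formula $\TT(1^{h-1},2,1^{m-h})\cdot\TT(1^{n})=\sum_l\binom{l}{h}\binom{m+n-l}{m-h}\TT(1^{l-1},2,1^{m+n-l})$, and closes with a purely combinatorial identity on Entringer numbers (their Lemma~\ref{Lem-5-2}, proved from the known bivariate generating function \eqref{generating E-kl}). Your method instead handles all $n$ at once via generating functions in an auxiliary variable $t$, so the dichotomy ``$n$ odd $\to\TT$, $n$ even $\to T$'' emerges automatically as the odd/even split of $\sum_n t^n$, and no induction is needed. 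Your closed form for $F(x,y)$ is equivalent to the paper's \eqref{generating E-kl} after a trigonometric rewriting.

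The one gap is the final identification, and it can be filled exactly along your lines. Writing $\chi=\frac{(t+1)\pi}{4}$ and $z=(1+is)/(1-is)$, split $\frac{\cos(2t\arctan s-\chi)}{\cos\chi}-1=c_+(z^t-1)+c_-(z^{-t}-1)$ with $c_\pm=e^{\mp i\chi}/(2\cos\chi)$, and deform the arc $1\to i$ to the broken path $1\to 0\to i$ (legitimate since $(z^t-1)/(z^2-1)$ is analytic in the closed quarter-disk for $|t|<1$). A short computation then gives
\[
\text{LHS}(t)=2\!\int_0^1\!\frac{s^t-1}{1-s^2}\,ds\;-\;2\!\int_0^1\!\frac{s^t}{1+s^2}\,ds\;+\;\frac{\pi}{2}\tan\chi.
\]
Now $\tan\chi=\sec\frac{\pi t}{2}+\tan\frac{\pi t}{2}$; the $\sec$ term matches the $s^{-t}$ piece of your second RHS integral via $\int_0^\infty\frac{s^t}{1+s^2}\,ds=\frac{\pi}{2}\sec\frac{\pi t}{2}$, and the $\tan$ term matches the $s^{-t}$ piece of your first RHS integral via the digamma reflection $\psi(\tfrac{1-t}{2})-\psi(\tfrac{1+t}{2})=-\pi\tan\frac{\pi t}{2}$. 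Hence LHS$(t)=$RHS$(t)$ for $|t|<1$, and comparing coefficients gives \eqref{Entringer-Ttilde}. So your contour-deformation heuristic is not just plausible but a complete argument; compared with the paper's route, yours is more direct and analytic, while theirs stays entirely inside the shuffle algebra of $\TT$-values and isolates a stand-alone Entringer identity.
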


\begin{example}
Examples in low weights are
\begin{align*}
T(3)&=\TT(2,1)+\TT(1,2),\\
\TT(4)&=\TT(2,1,1)+2\TT(1,2,1)+2\TT(1,1,2),\\
T(5)&=2\TT(2,1,1,1)+4\TT(1,2,1,1)+5\TT(1,1,2,1)+5\TT(1,1,1,2),\\
\TT(6)&=5\TT(2,1,1,1,1)+10\TT(1,2,1,1,1)+14\TT(1,1,2,1,1)\\
& \quad +16\TT(1,1,1,2,1)+16\TT(1,1,1,1,2),\\
T(7)&=16\TT(2,1,1,1,1,1)+32\TT(1,2,1,1,1,1)+46\TT(1,1,2,1,1,1)\\
  & \quad +56\TT(1,1,1,2,1,1)+61\TT(1,1,1,1,2,1)+61\TT(1,1,1,1,1,2).
\end{align*}
\end{example}

\begin{proof}
We give a proof by induction on $n\geq 1$. The case $n=1$ becomes the trivial identity $\TT(2)=\TT(2)$. 
Let $k\geq 2$ and assume that the assertions for $n\leq k-1$ hold and consider the case $n=k$. 
From the identity~\eqref{key}, by taking out the term with $j=k$ on the right-hand side and noting $(k+1)\TT(\underbrace{1,\ldots,1}_{k+1})
=\TT(1)\TT(\underbrace{1,\ldots,1}_{k})$ corresponds to the term $j=0$, we have
\begin{align*}
& i^{k} T(k+1) -i^{k+1} \TT(k+1) =\sum_{j=0}^{k-1}i^{j+1}\TT(j+1)\TT(\underbrace{1,\ldots,1}_{k-j})+\TT(\underbrace{1,\ldots,1}_{k-1},2).
\end{align*}
We use the real part of this identity:
\begin{align}
& {\rm Re}\left(i^{k} T(k+1) -i^{k+1} \TT(k+1)\right) =\sum_{j=0 \atop \textrm{$j$\,:\,odd}}^{k-1}(-1)^{(j+1)/2}\,\TT(j+1)\TT(\underbrace{1,\ldots,1}_{k-j})+\TT(\underbrace{1,\ldots,1}_{k-1},2). \label{eq-5-1-1}
\end{align}
By the induction hypothesis, we can substitute 
$$\widetilde{T}(j+1)=\sum_{h=1}^{j}\EE(j,h)\TT(\underbrace{1,\ldots,1,\overset{\tiny h}{\check{2}},1,\ldots,1}_{j})\quad (\textrm{$j$\,:\,odd},\ 1\leq j\leq k-1)$$
into \eqref{eq-5-1-1} and obtain
\begin{align*}
& {\rm Re}\left(i^{k} T(k+1) -i^{k+1} \TT(k+1)\right) \notag \\
& \ \ =\sum_{j=1 \atop \textrm{$j$\,:\,odd}}^{k-1}(-1)^{(j+1)/2}\sum_{h=1}^{j}\EE(j,h)\TT(\underbrace{1,\ldots,1,\overset{\tiny h}{\check{2}},1,\ldots,1}_{j})
\TT(\underbrace{1,\ldots,1}_{k-j})+\TT(\underbrace{1,\ldots,1}_{k-1},2). 
\end{align*}
Since multiple $\TT$-values satisfy the same shuffle product formulas as the usual multiple zeta values, we have the following  relation:
\begin{align*}
& \TT(\underbrace{1,\ldots,1,\overset{\tiny h}{\check{2}},1,\ldots,1}_{m})\TT(\underbrace{1,\ldots,1}_{n}) =\sum_{l=h}^{n+h}\binom{l}{h}\binom{m+n-l}{m-h}\TT(\underbrace{1,\ldots,1,\overset{\tiny l}{\check{2}},1,\ldots,1}_{m+n}).
\end{align*}
Therefore we obtain
\begin{align*}
& {\rm Re}\left(i^{k} T(k+1) -i^{k+1} \TT(k+1)\right)-\TT(\underbrace{1,\ldots,1}_{k-1},2) \notag \\
& \ \ =\sum_{j=1 \atop \textrm{$j$\,:\,odd}}^{k-1}(-1)^{(j+1)/2}\sum_{h=1}^{j}\EE(j,h)\sum_{l=h}^{k-j+h}\binom{l}{h}\binom{k-l}{j-h}\TT(\underbrace{1,\ldots,1,\overset{\tiny l}{\check{2}},1,\ldots,1}_{k})\\
& \ \ =\sum_{l=1}^{k}\left(\sum_{j=1 \atop \textrm{$j$\,:\,odd}}^{k-1}(-1)^{(j+1)/2}\sum_{h=1}^{j}\EE(j,h)\binom{l}{h}\binom{k-l}{j-h}\right)\TT(\underbrace{1,\ldots,1,\overset{\tiny l}{\check{2}},1,\ldots,1}_{k})\\
& \ \ =\sum_{l=1}^{k}\sum_{h=1}^{k-1}\sum_{j=h \atop \textrm{$j$\,:\,odd}}^{k-1}(-1)^{(j+1)/2}\binom{l}{h}\binom{k-l}{j-h}\EE(j,h)\TT(\underbrace{1,\ldots,1,\overset{\tiny l}{\check{2}},1,\ldots,1}_{k})\\
& \ \ =\sum_{l=1}^{k}\sum_{h=1}^{l}\sum_{j=h \atop \textrm{$j$\,:\,odd}}^{k}(-1)^{(j+1)/2}\binom{l}{h}\binom{k-l}{j-h}\EE(j,h)\TT(\underbrace{1,\ldots,1,\overset{\tiny l}{\check{2}},1,\ldots,1}_{k})\\
& \qquad -\delta_{k,\textrm{odd}}\sum_{l=1}^{k}(-1)^{(k+1)/2}\EE(k,l)\TT(\underbrace{1,\ldots,1,\overset{\tiny l}{\check{2}},1,\ldots,1}_{k}),
\end{align*}
where $\delta_{k,\textrm{odd}}=1\ ({\rm resp.}\ 0)$ if $k$ is odd (resp. even). 
By replacing $j$ with $h+j$, the last expression becomes 
\begin{align*}
& \sum_{l=1}^{k}\sum_{h=1}^{l}\sum_{j=0 \atop \textrm{$h+j$\,:\,odd}}^{k-l}(-1)^{(h+j+1)/2}\binom{l}{h}\binom{k-l}{j}\EE(h+j,h)\TT(\underbrace{1,\ldots,1,\overset{\tiny l}{\check{2}},1,\ldots,1}_{k})\\
& \qquad -\delta_{k,\textrm{odd}}\sum_{l=1}^{k}(-1)^{(k+1)/2}\EE(k,l)\TT(\underbrace{1,\ldots,1,\overset{\tiny l}{\check{2}},1,\ldots,1}_{k}).
\end{align*}
Here, we need
\begin{lemma}\label{Lem-5-2}\ \ For $k,l\in \mathbb{Z}_{\geq 0}$, 
\begin{align}
& \sum_{h=0}^{l}\sum_{j=0}^{k-l}\,i^{h+j}\binom{l}{h}\binom{k-l}{j}\EE(h+j,h)   =i^{k-1}\EE(k,l)+\delta_{k,l}\,i+\delta_{l,0}, \label{eq-Lem-5-2}
\end{align}
where $\delta_{k,l}$ is the Kronecker delta.
\end{lemma}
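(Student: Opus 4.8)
The plan is to recast \eqref{eq-Lem-5-2} as a single identity between exponential generating functions, and then to reduce it to the closed form for the bivariate generating function of the Entringer numbers. First I would set
\[ H(x,y)=\sum_{h,j\ge0} i^{h+j}\,\EE(h+j,h)\,\frac{x^h}{h!}\frac{y^j}{j!}, \]
the substitution $X=ix,\,Y=iy$ into the Entringer diagonal generating function $G(X,Y)=\sum_{h,j}\EE(h+j,h)X^hY^j/(h!\,j!)$. By the binomial (Cauchy) product of generating functions, the left-hand side of \eqref{eq-Lem-5-2} is exactly $l!\,(k-l)!$ times the coefficient of $\frac{x^l}{l!}\frac{y^{k-l}}{(k-l)!}$ in $e^{x+y}H(x,y)$. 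On the other side, since $\sum_{k\ge0}\sum_{l=0}^{k} i^{k}\EE(k,l)\frac{x^l}{l!}\frac{y^{k-l}}{(k-l)!}=H(x,y)$, the term $i^{k-1}\EE(k,l)$ sums to $-iH(x,y)$, while the two Kronecker terms sum to $ie^{x}$ and $e^{y}$ respectively. Hence, as formal power series, the entire lemma is equivalent to the single functional equation
\[ (e^{x+y}+i)\,H(x,y)=i\,e^{x}+e^{y}. \]

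It therefore suffices to prove the closed form $H(x,y)=\dfrac{i\,e^{x}+e^{y}}{e^{x+y}+i}$, after which the displayed functional equation is immediate. To this end I would extract from the defining recurrence its generating-function shadow. Rewriting $\EE(n,j)=\EE(n,j-1)+\EE(n-1,n-j)$ in the equivalent form $\EE(h+j+1,h+1)=\EE(h+j+1,h)+\EE(h+j,j)$ (valid for all $h,j\ge0$), multiplying by $i^{h+j+1}\frac{x^h}{h!}\frac{y^j}{j!}$ and summing gives the first-order partial differential equation
\[ \frac{\partial H}{\partial x}(x,y)=\frac{\partial H}{\partial y}(x,y)+i\,H(y,x), \]
together with the boundary condition $H(0,y)=1$ coming from $\EE(j,0)=\delta_{j,0}$. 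Since this PDE with that boundary condition is merely the Entringer recurrence together with its initial values repackaged, it determines $H$ uniquely as a formal power series. It then remains only to verify, by direct differentiation, that the candidate $H_0(x,y)=(ie^{x}+e^{y})/(e^{x+y}+i)$ satisfies this PDE (note $H_0(y,x)=(ie^{y}+e^{x})/(e^{x+y}+i)$ shares the denominator of $H_0$, so the computation is short) and that $H_0(0,y)=1$; both are routine.

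The main obstacle is structural rather than computational: the reflection $j\mapsto n-j$ in the Entringer recurrence is exactly what couples $H(x,y)$ to its transpose $H(y,x)$ in the PDE, so one cannot sum the recurrence naively in a single variable. One way around this is to pass to the characteristic coordinates $\xi=x+y,\ \eta=x-y$, in which the coupled system $H_x-H_y=iH(y,x)$ together with its transpose decouples into $H_{\eta\eta}=\tfrac14 H$ and can be integrated against the boundary data; but the cleanest route is simply to guess the closed form above and verify it, as indicated. Once $H=H_0$ is established, reading off the coefficient of $\frac{x^l}{l!}\frac{y^{k-l}}{(k-l)!}$ in the identity $(e^{x+y}+i)H=ie^{x}+e^{y}$ returns precisely \eqref{eq-Lem-5-2}, completing the proof.
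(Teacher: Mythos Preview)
Your proof is correct and follows the same generating-function strategy as the paper: both translate \eqref{eq-Lem-5-2} into an equality between bivariate exponential generating functions, and both reduce everything to the closed form for the Entringer generating function. The only real difference is how that closed form is obtained. The paper simply cites Stanley's formula
\[
\sum_{k\ge0}\sum_{l=0}^{k}\EE(k,l)\,\frac{x^{k-l}}{(k-l)!}\frac{y^l}{l!}=\frac{\cos x+\sin y}{\cos(x+y)}
\]
and substitutes $x\mapsto ix$, $y\mapsto iy$ to evaluate both sides directly, whereas you re-derive the equivalent closed form $H(x,y)=(ie^{x}+e^{y})/(e^{x+y}+i)$ from scratch via the PDE $H_x=H_y+iH(y,x)$ coming from the Entringer recurrence, together with the boundary $H(0,y)=1$ and the uniqueness argument you outline. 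Your route is thus self-contained (no appeal to \cite{Stanley2010} is needed) at the cost of the extra verification and uniqueness step; the paper's route is shorter but relies on the external reference. The functional equation $(e^{x+y}+i)H=ie^{x}+e^{y}$ that you isolate is exactly the identity $I_1=I_2$ that the paper establishes, after clearing the common denominator.
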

Multiplying both sides of \eqref{eq-Lem-5-2} by $i$ and taking the real part, we have
$$\sum_{h=1}^{l}\sum_{j=0 \atop \textrm{$h+j$\,:\,odd}}^{k-l}(-1)^{(h+j+1)/2}\binom{l}{h}\binom{k-l}{j}\EE(h+j,h) =\delta_{k,\textrm{even}}(-1)^{k/2}\EE(k,l)-\delta_{k,l},$$
where $\delta_{k,\textrm{even}}=1\ ({\rm resp.}\ 0)$ if $k$ is even (resp. odd). 
Therefore we obtain 
\begin{align*}
& {\rm Re}\left(i^{k} T(k+1) -i^{k+1} \TT(k+1)\right) \notag \\
& \ \ =\sum_{l=1}^{k}\left(\delta_{k,\textrm{even}}(-1)^{k/2}\EE(k,l)-\delta_{k,l}\right)\TT(\underbrace{1,\ldots,1,\overset{\tiny l}{\check{2}},1,\ldots,1}_{k}) +\TT(\underbrace{1,\ldots,1}_{k-1},2)\\
& \qquad -\delta_{k,\textrm{odd}}\sum_{l=1}^{k}(-1)^{(k+1)/2}\EE(k,l)\TT(\underbrace{1,\ldots,1,\overset{\tiny l}{\check{2}},1,\ldots,1}_{k})\\
& \ \ =\delta_{k,\textrm{even}}(-1)^{k/2}\sum_{l=1}^{k}\EE(k,l)\TT(\underbrace{1,\ldots,1,\overset{\tiny l}{\check{2}},1,\ldots,1}_{k})\\
& \qquad -\delta_{k,\textrm{odd}}(-1)^{(k+1)/2}\sum_{l=1}^{k}\EE(k,l)\TT(\underbrace{1,\ldots,1,\overset{\tiny l}{\check{2}},1,\ldots,1}_{k}).
\end{align*}
This gives the desired identities when $n=k$ for both even and odd $k$.
\end{proof}

\begin{proof}[Proof of Lemma \ref{Lem-5-2}] 
We use the following generating function of the Entringer numbers (see \cite[(2.2)]{Stanley2010}):
\begin{align}
&\sum_{k=0}^\infty\sum_{l=0}^{k}\,\EE(k,l)\frac{x^{k-l}}{(k-l)!}\frac{y^l}{l!}=\frac{\cos x+\sin y}{\cos (x+y)}. \label{generating E-kl}
\end{align}
Let $I_1(x,y)$ be the generating function of the left-hand side of \eqref{eq-Lem-5-2}:
\begin{align*}
I_1(x,y) & =\sum_{k=0}^\infty \sum_{l=0}^{k} \sum_{h=0}^l \sum_{j=0}^{k-l} i^{h+j}\binom{l}{h}\binom{k-l}{j}\EE(h+j,h)\frac{x^{k-l}}{(k-l)!}\frac{y^l}{l!}.
\end{align*}
By \eqref{generating E-kl}, we have
\begin{align*}
I_1(x,y) & =\sum_{h=0}^\infty \sum_{j=0}^\infty  i^{h+j} \EE(h+j,h) \sum_{l=0}^\infty\binom{l}{h} \frac{y^l}{l!}\sum_{k=l}^{\infty}\binom{k-l}{j}\frac{x^{k-l}}{(k-l)!}\\
 & =\sum_{h=0}^\infty \sum_{j=0}^\infty  i^{h+j} \EE(h+j,h) \sum_{l=0}^\infty\binom{l}{h} \frac{y^l}{l!}\sum_{k=0}^{\infty}\binom{k}{j}\frac{x^{k}}{k!}\\
 & =\sum_{h=0}^\infty \sum_{j=0}^\infty  i^{h+j} \EE(h+j,h) \frac{x^j}{j!}\frac{y^{h}}{h!}e^{x+y}\\
 & =\sum_{m=0}^\infty \sum_{h=0}^{m} i^{m} \EE(m,h) \frac{x^{m-h}}{(m-h)!}\frac{y^{h}}{h!}e^{x+y}\\
 & =\frac{\cos(ix)+\sin(iy)}{\cos(i(x+y))}e^{x+y}\\
 & =\frac{e^x+e^{-x}+ie^y-ie^{-y}}{1+e^{-2x-2y}}.
\end{align*}
On the other hand, the generating function of the right-hand side of \eqref{eq-Lem-5-2} is 
\begin{align*}
I_2(x,y) & =\sum_{k=0}^\infty \sum_{l=0}^{k} \left(i^{k-1}\EE(k,l)+\delta_{k,l}i+\delta_{l,0}\right)\frac{x^{k-l}}{(k-l)!}\frac{y^l}{l!}\\
& =-i\sum_{k=0}^\infty \sum_{l=0}^{k} \EE(k,l)\frac{(ix)^{k-l}}{(k-l)!}\frac{(iy)^l}{l!}+ie^y+e^x\\
& =-i\left(\frac{e^x+e^{-x}+ie^{y}-ie^{-y}}{e^{x+y}+e^{-x-y}}\right)+ie^y+e^x\\
& =\frac{e^x+e^{-x}+ie^y-ie^{-y}}{1+e^{-2x-2y}}.
\end{align*}
Hence $I_1(x,y)=I_2(x,y)$, which gives the proof of \eqref{eq-Lem-5-2}, and now the proof of Theorem~\ref{Ent-rel} is complete.
\end{proof}

\subsection{Relation to modular forms}\label{subsec:modular}

In this subsection, we present our experimental discovery on a possible connection between double $\TT$- 
(as well as $T$-) values and modular forms
of level 4 (and 2), or precisely speaking, connection to some `period polynomials' associated to those modular forms.
This is certainly an analogous phenomenon to our previously studied relations between double zeta values and modular forms on the full modular group (\cite{GKZ}).
We however could not give a proof and leave it to the interested readers. 

For $N=2$ and $4$, even integer $k\ge4$, and $1\le j\le (k-2)/2$, define the polynomial $\widetilde S_{N,k,j}(X)$ with rational coefficients by
\begin{align*}
\widetilde S_{N,k,j}(X)&=\frac{N^{k-2j-1}}{k-2j} X^{k-2} B_{k-2j}^0\left(\frac1{NX}\right)-\frac1{2j}B_{2j}^0(X)\\
&\qquad \quad -\frac{kB_{2j}B_{k-2j}}{2j(k-2j)B_k}\left(\frac{1-2^{-2j}}{1-2^{-k}}\frac{X^{k-2}}{N}-\frac{1-2^{-k+2j}}{1-2^{-k}}\frac1{N^{2j}}\right),
\end{align*}
where $B_{n}$ is the Bernoulli number and $B_{n}^0(X)$ is the usual Bernoulli polynomial with the term $nB_1X^{n-1}$ removed:
\[  B_{n}^0(X) =\sum_{0\le j\le n \atop \text{$j$:even}}\binom{n}{j} B_j X^{n-j}. \]
We further define $P_{N,k,j}(X)$ and $P_{N,k,j}^{(\pm)}(X)$  by using $\widetilde S_{N,k,j}(X)$ as
\[ P_{N,k,j}(X)   
=(-2X+2)^{k-2}\widetilde S_{N,k,j}\left(\frac{X+1}{-2X+2}\right) \]
and 
\[ P_{N,k,j}^{(\pm)}(X)=\frac12\left(P_{N,k,j}(X)\pm P_{N,k,j}(-X)\right).  \]
We can now state our conjecture.
 
\begin{conj}\label{conj-modular}
1)  For $N=2$ or $4$, even integers $k\ge4$, and integers $j$ with $1\le j\le (k-2)/2$, write the polynomial $P_{N,k,j}^{(+)}(X+1)$ as
\[P_{N,k,j}^{(+)}(X+1) = \sum_{i=0}^{k-2} a_i \binom{k-2}{i} X^i.  \]
(Each coefficient $a_i$ depends on $N$, $k$, and $j$.)
Then we have the following relation among the double $\TT$-values:
\[ \sum_{i=0}^{k-2} a_i\, \TT(i+1,k-i-1)=0. \]

The $\Q$-vector space $V_{4,k}$ spanned by $ P_{4,k,j}^{(+)}(X)\ (1\le j \le (k-2)/2)$ is of dimension $\left[(k-2)/4\right]$, which we conjecture
to be equal to the number of independent relations among double $\TT$-values of weight $k$. The polynomials $P_{2,k,j}^{(+)}(X)$ are contained in
$V_{4,k}$, and span the subspace of dimension $\left[(k-2)/6\right]$.

2)  For $N=2$ or $4$, even integers $k\ge4$, and integers $j$ with $1\le j\le (k-2)/2$, write the polynomial $P_{N,k,j}^{(-)}(X+1)$ as
\[P_{N,k,j}^{(-)}(X+1) = \sum_{i=0}^{k-3} b_i \binom{k-2}{i} X^i.  \]
(Here too we suppress the dependence on $N$ etc. in the notation of $b_i$. Note that the degree of the odd polynomial $P_{N,k,j}^{(-)}(X)$ is at most $k-3$.)
Then we have the following relation among the double $T$-values:
\[ \sum_{i=0}^{k-3} b_i\, T(i+1,k-i-1)=0. \]

In this case, the $\Q$-vector space $W_k$ spanned by $P_{4,k,j}^{(-)}(X)\ (1\le j \le (k-2)/2)$ is the same as that spanned by $P_{2,k,j}^{(-)}(X)\ (1\le j \le (k-2)/2)$,
and the conjectural dimension of $W_k$ is $\left[k/4\right]-1$.  The conjectural number of independent relations among double $T$-values is  $k/2-2$.

\end{conj}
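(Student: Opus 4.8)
The plan is to follow the template of Gangl--Kaneko--Zagier \cite{GKZ}, with the full modular group replaced by $\Gamma_0(4)$ (for the $\TT$-side, $N=4$) and $\Gamma_0(2)$ (for the $T$-side, $N=2$), and with the even double zeta values replaced by our double $\TT$- and $T$-values. First I would assemble, in each fixed weight $k$, the complete set of \emph{double shuffle relations}. The shuffle relations come directly from the iterated integral expression~\eqref{integ-exp}, which makes $\TTcal$ a shuffle algebra; the harmonic (stuffle) relations come from the Dirichlet series~\eqref{ttilde-def} together with the product of two single conductor-$4$ $L$-values, whose diagonal contributions involve $\chi_4^2$. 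Subtracting the two expansions of a product of single values produces, in each weight, a finite-dimensional space of linear relations among the weight-$k$ double values. The aim is to show that the relations predicted by $P_{N,k,j}^{(\pm)}$ span exactly the \emph{extra} relations not already forced by double shuffle, and that the residual dimension is governed by cusp forms, exactly as at level one.

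Next I would identify the polynomials $\widetilde S_{N,k,j}(X)$ with \emph{period polynomials of modular forms of level $N$}. Their shape---a combination of the Bernoulli polynomials $B_{k-2j}^0$ and $B_{2j}^0$ together with a rational correction term built from $B_{2j}B_{k-2j}/B_k$ and the Euler factors $(1-2^{-\bullet})$---is exactly the form taken by an Eisenstein period polynomial in the weight-$k$ Eichler--Shimura space, the Euler-factor corrections encoding the oldform/newform bookkeeping between levels $1,2,4$; the correction term is precisely what is needed to cancel the Eisenstein cocycle obstruction and land in the cuspidal period space. Moreover, the linear substitution $X\mapsto (X+1)/(-2X+2)$ and the automorphy factor $(-2X+2)^{k-2}$ appearing in $P_{N,k,j}$ realize the Möbius transformation implementing the same $t\mapsto (1-u)/(1+u)$ duality already used to pass from~\eqref{integ-exp} to~\eqref{integ-exp2}. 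Thus $P_{N,k,j}$ is the period polynomial written in the variable natural for the $\TT$- and $T$-iterated integrals, and the $\pm$-symmetrization isolates its even and odd parts, matching the even-depth ($\TT$) versus real-duality ($T$) behaviour.

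The heart of the argument is then a pairing statement. Realizing each double value $\TT(i+1,k-i-1)$ (resp.\ $T(i+1,k-i-1)$) as the period of a weight-$k$ iterated integral over the relevant modular symbol, I would show that pairing the coefficient vector $(a_i)$ coming from $P_{N,k,j}^{(+)}$ (resp.\ $(b_i)$ from $P_{N,k,j}^{(-)}$) against these double values reduces, via the two- and three-term cocycle (Eichler) relations satisfied by period polynomials, to a telescoping identity that vanishes. Concretely, $\sum_i a_i\,\TT(i+1,k-i-1)$ should be rewritten as the integral of the closed differential attached to the period polynomial against the modular symbol, which is annihilated because $P_{N,k,j}^{(+)}$ lies in the image of the period map of a genuine modular form; the expansion in the binomial basis $\binom{k-2}{i}X^i$ is precisely the bookkeeping that turns the period pairing into the stated linear combination of double values.

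The main obstacle---and the reason the statement remains a conjecture---is the absence of a \emph{proven} Eichler--Shimura/period dictionary at level $4$ linking these explicitly defined polynomials to actual relations among the $L$-values. One must (i) verify, by an explicit Bernoulli-number computation generalizing the level-one case in \cite{GKZ}, that $\widetilde S_{N,k,j}$ really is the period polynomial of an appropriate (Eisenstein-type) form for $\Gamma_0(N)$, thereby supplying the cocycle relation invoked above; and (ii) establish the dimension assertions $\dim V_{4,k}=\left[(k-2)/4\right]$ and $\dim W_k=\left[k/4\right]-1$, which require control of $\dim S_k(\Gamma_0(4))$ and $\dim S_k(\Gamma_0(2))$ together with the even/odd splitting of the associated period spaces. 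Step (i), and the precise matching of the resulting cuspidal period relations with the double-shuffle relation space, is where the genuinely new level-$4$ input is needed; once it is in place, the vanishing of $\sum_i a_i\,\TT(i+1,k-i-1)$ and $\sum_i b_i\,T(i+1,k-i-1)$ should follow formally, and comparing the counts in (ii) with the dimension of the double-shuffle relation space would complete the identification of \emph{all} relations among weight-$k$ double $\TT$- and $T$-values.
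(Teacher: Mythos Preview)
The statement you are addressing is labeled \texttt{Conjecture} in the paper, and the authors explicitly write that they ``could not give a proof and leave it to the interested readers.'' There is therefore no proof in the paper to compare your proposal against; the paper offers only numerical verification of individual instances (the examples following the conjecture) together with Remark~i)--iii) situating the polynomials $\widetilde S_{N,k,j}$ in the literature.

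Your proposal is accordingly not a proof but a research plan, and you yourself say as much in the final paragraph. As a plan it is reasonable and indeed parallel to \cite{GKZ}, which the authors cite as the model. One correction: your step~(i), verifying that $\widetilde S_{N,k,j}$ is the period polynomial of a cusp form on $\Gamma_0(N)$, is not open. The paper's Remark~i) records that these are precisely the period polynomials $r^{+}(R_{\Gamma_0(N),k-2,2j-1})$ computed by Fukuhara--Yang \cite{FY,FY2}, and Remark~iii) cites their results on the span. So the Eichler--Shimura input you flag as missing is in fact available; what is genuinely absent is your ``pairing statement''---the mechanism translating the period-polynomial cocycle relations into linear relations among the double $\TT$- and $T$-values. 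At level~1 this goes through the double-shuffle structure of multiple zeta values; here the stuffle side for $\TT$-values is more delicate (the diagonal terms involve $\chi_4^2$, hence $T$-values, not $\TT$-values), and the paper provides no replacement. That bridge, not the modular-form side, is the real gap.
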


\begin{remark}
i)  The polynomial $\widetilde S_{N,k,j}(X)$ is the period polynomial $r^{+}(R_{\Gamma_0(N),k-2,2j-1})(X)$ in the work of Fukuhara and Yang \cite{FY}.
A period polynomial is a polynomial associated to a cusp form whose coefficients are `periods' of the given cusp form.
They computed period polynomials explicitly for some specific cusp  forms $R_{\Gamma_0(N),k-2,2j-1}$ on the congruence subgroup $\Gamma_0(N)$, and exhibited several 
properties of those. For more details, see their paper \cite{FY} and the references therein. For basics of modular forms, see e.g. Miyake~\cite{Miyake}.

ii) The number $\left[(k-2)/4\right]$ appeared in 1) of the above conjecture is equal to the difference $\dim S_k(\Gamma_0(4))-\dim S_k(\Gamma_0(2))$ of dimensions of the spaces of 
cusp forms of weight $k$ on the congruence subgroups $\Gamma_0(4)$ and $\Gamma_0(2)$. 
And the difference $\left[(k-2)/4\right]-\left[(k-2)/6\right]$ is equal to the dimension of the space of new forms of weight $k$ on $\Gamma_0(4)$. Also, 
the number $\left[k/4\right]-1$ in 2) is equal to $\dim S_k(\Gamma_0(2))$, whereas  $k/2-2=\dim S_k(\Gamma_0(4))$.
We could not find how to produce the remaining $k/2-2-(\left[k/4\right]-1)=\left[(k-2)/4\right]$ relations of double $T$-values
via a similar procedure.

iii) By Fukuhara and Yang~\cite[Cor.~1.9]{FY2} (resp. \cite[Cor.~1.5]{FY}), the polynomials $\widetilde S_{4,k,j}(X)$ (resp. $\widetilde S_{2,k,j}(X)$)  
span the $k/2-2=\dim S_k(\Gamma_0(4))$ (resp. $\left[k/4\right]-1=\dim S_k(\Gamma_0(2))$) dimensional space.

\end{remark}

\begin{example}
i)  For $N=4$, $k=6$, and $j=1$, we have 
\[ \widetilde S_ {4,6,1}(X)=-\frac12 X^4+\frac12 X^2-\frac1{32},\quad  P_{4,6,1}^{(+)}(X)=X^4-10X^2+1\]
and
\begin{align*} P_{4,6,1}^{(+)}(X+1) &= - 8 - 16X - 4X^2+ 4X^3 +X^4\\
& =-8\binom{4}{0}-4\binom{4}{1}X -\frac{2}{3} \binom{4}{2} X^2+1\cdot\binom{4}{3}X^3+1\cdot \binom{4}{4}X^4\\
&=-\frac13\left(24\binom{4}{0}+12\binom{4}{1}X +2\binom{4}{2} X^2-3\binom{4}{3}X^3-3 \binom{4}{4}X^4\right).  
\end{align*}
Accordingly, we numerically (to high precision) have
\[  24\TT(1,5)+12\TT(2,4)+2\TT(3,3)-3\TT(4,2)-3\TT(5,1)=0.\]

ii)  For $N=2$, $k=8$, and $j=2$, we have 
\begin{align*} \widetilde S_ {2,8,2}(X)&=-\frac1{17} X^6+\frac14X^4-\frac18 X^2+\frac1{136},\\ 
P_{2,8,2}^{(+)}(X)&=-\frac{2}{17}(5X^6-61X^4-61X^2+5)
\end{align*}
and
\begin{align*} P_{2,8,2}^{(+)}(X+1) &=\frac2{17}(112+ 336X+ 352X^2+144X^3- 14X^4 - 30X^5-5 X^6 )\\
&=\frac2{17}\left(112\binom{6}{0}+56\binom{6}{1}X+\frac{352}{15}\binom{6}{2}X^2+\frac{36}{5}\binom{6}{3}X^3\right.\\
&\qquad\qquad  \left. -\frac{14}{15}\binom{6}{4}X^4-5\binom{6}{5}X^5-5\binom{6}{6}X^6\right).
\end{align*}
We compute 
\[ 112\TT(1,7)+56\TT(2,6)+\frac{352}{15}\TT(3,5)+\frac{36}{5}\TT(4,4)-\frac{14}{15}\TT(5,3)-5\TT(6,2)-5\TT(7,1)=0 \]
to very high precision.

iii)  For $N=4$, $k=8$, and $j=1$, we have 
\[\widetilde S_ {4,8,1}(X)=\frac{208}{51} X^6-\frac{16}3 X^4 +\frac76 X^2-\frac{19}{408},\quad
P_{4,8,1}^{(-)}(X)=  -\frac{640}{17}\left(X^5 -8 X^3 + X\right) \]
and
\begin{align*} &P_{4,8,1}^{(-)}(X+1) = \frac{640}{17}\left(6+18X+14X^2-2X^3-5X^4-X^5\right)  \\
&=\frac{64}{51}\left(180\binom{6}{0}+90\binom{6}{1}X +28\binom{6}{2} X^2-3\binom{6}{3}X^3-10 \binom{6}{4}X^4-5\binom{6}{5} X^5\right).   
\end{align*}
The corresponding conjectural relation is  
\[  180T(1,7)+90 T(2,6)+28 T(3,5)-3 T(4,4)-10 T(5,3)-5 T(6,2)=0.\]
\end{example}

At the end of this section, we mention a connection to our previously obtained `weighted sum formula'.

If we start with the polynomials $X^{k-2}$ or $1$ instead of $\widetilde S_{N,k,j}(X)$, we obtain the weighted sum formulas
for double $\TT$- and $T$- values. More precisely, for even $k\ge 4$, take the even and odd parts of the polynomial
\[ (-2X+2)^{k-2}\left(\frac{X+1}{-2X+2}\right)^{k-2} = (X+1)^{k-2},\]
namely $\left((X+1)^{k-2}\pm (-X+1)^{k-2}\right)/2$ and make the shift $X\to X+1$ to obtain
\[ \frac12 \left((X+2)^{k-2}\pm (-X)^{k-2}\right) =\frac12 \left( \sum_{j=0}^{k-2} 2^{k-j}\binom{k-2}{j} X^j\pm X^{k-2}\right) .\]
Then, this `corresponds' to the weighted sum formula
\[ \sum_{j=0}^{k-2} 2^{k-j-2}\TT(j+1,k-1-j)+\TT(k-1,1)=(k-1)T(k) \]
proved in \cite[Prop.~4.2]{AK2004} in the `$+$' case, and 
\[ \sum_{j=0}^{k-3} 2^{k-j-2}T(j+1,k-1-j)=(k-1)T(k) \]
proved in \cite[Th.~3.2]{KT2020-Tsukuba} in the `$-$' case.  If we start with $1$ instead, the resulting polynomial is essentially the same.

\section{Certain zeta functions and poly-Bernoulli and Euler numbers}\label{sec:zeta-polyEuler}

In our previous work \cite{AK1999, KT2020-ASPM}, we studied zeta functions
\begin{align}
\xi(k_1,\ldots,k_r;s)&=\frac{1}{\Gamma(s)}\int_0^\infty {t^{s-1}}\frac{\Li (k_1,\ldots,k_r;1-e^{-t})}{e^t-1}\,dt \label{Def-xi} \\
\intertext{and}
\psi(k_1,\ldots,k_r;s)&=\frac{1}{\Gamma(s)}\int_0^\infty {t^{s-1}}\frac{A(k_1,\ldots,k_r;\tanh(t/2))}{\sinh t}\,dt,
\label{Def-psi}
\end{align}
both converge in ${\rm Re}(s)>0$ and are analytically continued to entire functions.
Here,  
\[ \Li(k_1,\ldots,k_r;z)=\sum_{0< m_1<\cdots<m_r} \frac{z^{m_r}}{m_1^{k_1}\cdots m_r^{k_r}}\qquad (k_1,\ldots,k_r\in \mathbb{Z};\ |z|<1) \]
is the multiple polylogarithm and $A(k_1,\ldots,k_r;z)$ is its `level 2' analogue already appeared in \eqref{Def-A-polylog}.

We now introduce a level 4 analogue of these zeta functions by using the level 4 variant $\aa(k_1,\ldots,k_r;z)$ of multiple 
polylogarithm, which is defined by  the iterated integral as follows.  The idea is just to replace the starting point $0$ 
of the iterated integral of $A(k_1,\ldots,k_r;z)$ with $i=\sqrt{-1}$.

\begin{definition} \label{Def-2-7}
For $k_1,\ldots,k_r\in \mathbb{Z}_{\geq 1}$, define
\begin{align}\label{Def-aa-multi}
\aa(k_1,\ldots,k_r;z)
& =
\begin{cases}
\displaystyle{\int_{i}^{z}\frac{1}{u}\,{\aa(k_1,\ldots,k_{r-1},k_r-1;u)}\,du} & (k_r\geq 2),\\
\displaystyle{\int_{i}^{z}\frac{2}{1-u^2}\,\aa(k_1,\ldots,k_{r-1};u)\,du} & (k_r=1),
\end{cases}
\end{align}
with $\aa(\emptyset;u)=1$. In particular, 
\begin{equation}\label{aa1} \aa(1;z)=\int_{i}^{z}\frac{2}{1-u^2}=A(1;z)-A(1;i)=2\tanh^{-1}(z)-\frac{\pi i}{2}. \end{equation}
\end{definition}

Note that if $k_r\ge2$, then we may consider the value of $\aa(k_1,\ldots,k_r;z)$ at $z=1$, and in fact this was already
appeared in the proof of Theorem~\ref{Teven}. We state the formula again as a proposition.

\begin{prop} For $k_1,\ldots,k_r\in\Z_{\ge1}$ with $k_r\ge2$, let $(l_1,\ldots,l_s)$ be the dual index of 
$(k_1,\ldots,k_r)$. Then we have
\begin{equation}\label{aaat1}
\aa(k_1,\ldots,k_r;1)=i^{r-k}\TT(l_1,\ldots,l_s). 
\end{equation}
\end{prop}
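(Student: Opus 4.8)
The plan is to read off $\aa(k_1,\ldots,k_r;1)$ as an iterated integral along the segment from $i$ to $1$, transport it to a real integral over $[0,1]$ by exactly the substitution used in the proof of Theorem~\ref{Teven}, and then recognize the resulting real integral as the dual-index expression \eqref{integ-exp2}. This is really just the special case of that earlier computation in which the whole path runs from $i$ to $1$ (the piece from $0$ to $i$ being empty), so no genuinely new idea is needed.

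First I would unwind the recursion \eqref{Def-aa-multi} exactly as \eqref{deriv-a-multi} was unwound for $A$, obtaining
\[ \aa(k_1,\ldots,k_r;1)=\int_i^1 \Omega_1\circ\cdots\circ\Omega_k, \]
where $\Omega_j=2dt/(1-t^2)$ when $j\in\{1,k_1+1,\ldots,k_1+\cdots+k_{r-1}+1\}$ and $\Omega_j=dt/t$ otherwise. This is the same string of forms as in \eqref{T-integ-exp}, the only difference being that the base point is $i$ rather than $0$; there are $r$ forms of the first kind and $k-r$ of the second.

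Next I would carry out the substitution $t\mapsto(-iu+1)/(u-i)$ appearing in the proof of Theorem~\ref{Teven}. A direct computation shows that this M\"obius map sends $u=0$ to $t=i$ and $u=1$ to $t=1$, so the path $i\to1$ in $t$ becomes $0\to1$ in $u$, and that it transforms the two $1$-forms by
\[ \frac{2\,dt}{1-t^2}=\frac{2\,du}{1-u^2},\qquad \frac{dt}{t}=i^{-1}\frac{2\,du}{1+u^2}. \]
Only the second kind of form produces a constant, and since there are exactly $k-r$ of them, the whole integral acquires the scalar $i^{-(k-r)}=i^{r-k}$. Hence
\[ \aa(k_1,\ldots,k_r;1)=i^{r-k}\int_0^1 \Omega_1'\circ\cdots\circ\Omega_k', \]
where $\Omega_j'=2du/(1-u^2)$ at the block-start positions dictated by $(k_1,\ldots,k_r)$ and $\Omega_j'=2du/(1+u^2)$ elsewhere.

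Finally I would identify this real integral. Its string of forms consists of $2du/(1-u^2)$ followed, within each of the $r$ blocks, by $k_j-1$ copies of $2du/(1+u^2)$; this is precisely the right-hand side of \eqref{integ-exp2} read for the index $(l_1,\ldots,l_s)$, whose dual index is $(k_1,\ldots,k_r)$ since duality is an involution. Because $k_r\ge2$, the dual $(l_1,\ldots,l_s)$ satisfies $l_s\ge2$, so it is admissible and \eqref{integ-exp2} applies to it, giving $\int_0^1 \Omega_1'\circ\cdots\circ\Omega_k'=\TT(l_1,\ldots,l_s)$; combining with the previous display yields the claim. The only delicate point is the bookkeeping: confirming the exact power of $i$ from the substitution (no extra factor on $2dt/(1-t^2)$, a factor $i^{-1}$ on each $dt/t$) and, crucially, remembering that the $2du/(1-u^2)$ markers in the transformed integral are governed by $(k_1,\ldots,k_r)$, which is the dual of $(l_1,\ldots,l_s)$ rather than of itself, so that \eqref{integ-exp2} must be invoked for the dual index. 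Everything else is the direct specialization of the argument already made in the proof of Theorem~\ref{Teven}.
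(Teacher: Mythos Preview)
Your proof is correct and follows exactly the approach of the paper: write $\aa(k_1,\ldots,k_r;1)$ as an iterated integral from $i$ to $1$, apply the change of variables $t\mapsto(-iu+1)/(u-i)$ (picking up the factor $i^{r-k}$ from the $k-r$ forms $dt/t$), and identify the result via \eqref{integ-exp2}. The paper's own proof is just a one-sentence pointer to this same computation already carried out in the proof of Theorem~\ref{Teven}, so your write-up is simply a more explicit version of what the authors intended.
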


\begin{proof} As already done in the proof of Theorem~\ref{Teven}, this can be shown by the change of variables $t\to (-iu+1)/(u-i)$
in the iterated integral 
\begin{align*}
&\aa(k_1,\ldots,k_r;1)\\
&=\int_i^1\frac{2dt}{1-t^2}\circ\underbrace{\frac{dt}{t}\circ\cdots\circ\frac{dt}{t}}_{k_1-1}\circ
\frac{2dt}{1-t^2}\circ\underbrace{\frac{dt}{t}\circ\cdots\circ\frac{dt}{t}}_{k_2-1}\circ\cdots\cdots\circ
\frac{2dt}{1-t^2}\circ\underbrace{\frac{dt}{t}\circ
\cdots\circ\cdots\circ\frac{dt}{t}}_{k_r-1}
\end{align*}
and by the formula \eqref{integ-exp2}.
\end{proof}

We also record here a formula of $A(\underbrace{1,\ldots,1}_{r-1},k+1;z)$ expressed in terms of $\TT$-values, $\aa(\underbrace{1,\ldots,1}_{j-1},k+1;z)\ (1\le j\le r)$, and $\log z$.  
Specialization $z=1$ gives  \eqref{TbyTtilde}.

\begin{prop} For $r,k\ge1$, we have
\begin{align*}
&A(\underbrace{1,\ldots,1}_{r-1},k+1;z)\\
&=\sum_{j=1}^r i^{r-j}\TT(\underbrace{1,\ldots,1}_{r-j})\aa(\underbrace{1,\ldots,1}_{j-1},k+1;z)
+\sum_{j=0}^k i^r \TT(\underbrace{1,\ldots,1}_{r-1},j+1)\frac{(\log z-\log i)^{k-j}}{(k-j)!}.
\end{align*}
\end{prop}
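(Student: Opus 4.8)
The plan is to repeat the path-splitting computation performed in the Example following Theorem~\ref{Teven}, but with the upper endpoint kept general at $z$ rather than specialized to $1$. Combining the series definition \eqref{Def-A-polylog} with the derivative formula \eqref{deriv-a-multi}, the left-hand side is the iterated integral
\[ A(\underbrace{1,\ldots,1}_{r-1},k+1;z)=\int_0^z\underbrace{\frac{2dt}{1-t^2}\circ\cdots\circ\frac{2dt}{1-t^2}}_{r}\circ\underbrace{\frac{dt}{t}\circ\cdots\circ\frac{dt}{t}}_{k}, \]
in which the $r$ copies of $2dt/(1-t^2)$ occupy the first $r$ slots and the $k$ copies of $dt/t$ the last $k$. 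First I would decompose the path of integration as $0\to i\to z$ and invoke the path-composition formula (as in the proof of Theorem~\ref{Teven}), writing the integral as $\sum_{j=0}^{r+k}\bigl(\int_0^i\Omega_1\circ\cdots\circ\Omega_j\bigr)\bigl(\int_i^z\Omega_{j+1}\circ\cdots\circ\Omega_{r+k}\bigr)$. Since $i$ is an interior point of the closed unit disk and is a singularity of none of the forms $2dt/(1-t^2)$ or $dt/t$, this decomposition is legitimate.

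Next I would split the sum over the cut position $j$ into two ranges according to whether the second leg still carries a factor $2dt/(1-t^2)$. For $0\le j\le r-1$ the first leg involves $j$ copies of $2dt/(1-t^2)$ and, by \eqref{integ-exp3}, equals $i^{\,j}\TT(\underbrace{1,\ldots,1}_{j})$, while the second leg begins with $2dt/(1-t^2)$ and is, directly by Definition~\ref{Def-2-7}, equal to $\aa(\underbrace{1,\ldots,1}_{r-j-1},k+1;z)$. Reindexing by $j\mapsto r-j$ turns these terms precisely into the first sum $\sum_{j=1}^r i^{r-j}\TT(\underbrace{1,\ldots,1}_{r-j})\aa(\underbrace{1,\ldots,1}_{j-1},k+1;z)$ of the proposition, the boundary case $j=0$ (empty first leg, $\TT$ of the empty index equal to $1$) producing the top term $\aa(\underbrace{1,\ldots,1}_{r-1},k+1;z)$.

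For $r\le j\le r+k$ the first leg now carries all $r$ copies of $2dt/(1-t^2)$ together with $j-r$ copies of $dt/t$, so by \eqref{integ-exp3} it equals $i^{\,r}\TT(\underbrace{1,\ldots,1}_{r-1},j-r+1)$, whereas the second leg consists of $r+k-j$ copies of $dt/t$ alone, whose iterated integral from $i$ to $z$ is $(\log z-\log i)^{r+k-j}/(r+k-j)!$. Writing $m=j-r$ and observing that the boundary value $m=0$ gives $\TT(\underbrace{1,\ldots,1}_{r-1},1)=\TT(\underbrace{1,\ldots,1}_{r})$, these terms assemble into the second sum $\sum_{j=0}^k i^r\TT(\underbrace{1,\ldots,1}_{r-1},j+1)(\log z-\log i)^{k-j}/(k-j)!$. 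Adding the two ranges gives the asserted identity, and setting $z=1$ (where $\aa(\ldots;1)$ is handled by \eqref{aaat1} and $\log 1-\log i=-\pi i/2$) recovers \eqref{TbyTtilde} as noted.

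The computation is almost entirely bookkeeping, so the only points that require care are (i) identifying the second leg as an $\aa$-value exactly when the cut leaves at least one $2dt/(1-t^2)$ to its right, and as a pure power of $\log z-\log i$ otherwise, and (ii) fixing the branch of the logarithm in $\int_i^z dt/t\circ\cdots\circ dt/t=(\log z-\log i)^{k}/k!$ compatibly with the path used inside the unit disk (so that $\log i=\pi i/2$, consistent with \eqref{aa1}). I expect no genuine obstacle beyond keeping the two reindexings straight.
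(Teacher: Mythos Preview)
Your proposal is correct and follows exactly the approach the paper indicates: it says ``This can be shown in the same manner using the path composition formula as in the calculation of \eqref{TbyTtilde}'' and omits the details, which are precisely the ones you have supplied. The split at $j=r-1$ versus $j\ge r$, the identification of the $\int_0^i$ leg via \eqref{integ-exp3} and of the $\int_i^z$ leg via Definition~\ref{Def-2-7} (or as a pure power of $\log z-\log i$), and the two reindexings are all as intended.
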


\begin{proof}  This can be shown in the same manner using the path composition formula
as in the calculation of \eqref{TbyTtilde}.  We omit the detail here.
\end{proof}

Now we define the zeta function associated to our $\aa$.

\begin{definition}\label{Def-3-8}\ 
For $k_1,\ldots,k_r\in \mathbb{Z}_{\geq 1}$, set
\begin{equation}
\lambda(k_1,\ldots,k_r;s)=\frac{1}{\Gamma(s)}\int_0^\infty {t^{s-1}}\frac{\aa (k_1,\ldots,k_r;\tanh(t/2+\pi i/4))}{\cosh t}\,dt\quad ({\rm Re}(s)>0). \label{Def-lambda}
\end{equation}
\end{definition}

Before discussing the convergence of the integral in ${\rm Re}(s)>0$, let us first explain our motivation of introducing these functions, 
in other words, explain how we chose the integrants of these.   For this, we recall `poly-Bernoulli numbers'.

Poly-Bernoulli numbers, having two versions $B_n^{(k)}$ and $C_n^{(k)}$,  were defined by 
the first named author in \cite{Kaneko1997} and 
in Arakawa-Kaneko \cite{AK1999} by using generating series.  For an integer $k\in \mathbb{Z}$, 
the sequences $\{B_n^{(k)}\}$ and $\{C_n^{(k)}\}$ of rational numbers are given by 
\begin{align*}
&\frac{{\rm Li}_{k}(1-e^{-t})}{1-e^{-t}}=\sum_{n=0}^\infty B_n^{(k)}\frac{t^n}{n!},\qquad \frac{{\rm Li}_{k}(1-e^{-t})}{e^t-1}=\sum_{n=0}^\infty C_n^{(k)}\frac{t^n}{n!},  
\end{align*}
where ${\rm Li}_{k}(z)={\rm Li}(k;z)$ (in the notation above) is the classical polylogarithm function (or rational function when $k\le0$) given by
\begin{equation}
{\rm Li}_{k}(z)=\sum_{m=1}^\infty \frac{z^m}{m^k}\quad (|z|<1). \label{Def-polylog}
\end{equation}
Since ${\rm Li}_1(z)=-\log(1-z)$, we see that
$B_n^{(1)}$ and $C_n^{(1)}$
are usual Bernoulli numbers, where the only difference being $B_n^{(1)}=1/2$ and $C_n^{(1)}=-1/2$. 
A multiple version  $C_n^{(k_1,\ldots,k_r)}$ of $C_n^{(k)}$ is the multi-poly-Bernoulli numbers defined in Imatomi-Kaneko-Takeda \cite{IKT2014} by the generating series
\begin{equation}\label{mpBern}
\frac{{\rm Li}(k_1,\ldots,k_r;1-e^{-t})}{e^t-1}=\sum_{n=0}^\infty C_n^{(k_1,\ldots,k_r)}\frac{t^n}{n!}.
\end{equation}
The function $\xi(k_1,\ldots,k_r;s)$ was introduced as the one interpolating $\{C_n^{(k_1,\ldots,k_r)}\}$ at negative integer arguments:
$$\xi(k_1,\ldots,k_r;-n)=(-1)^n C_n^{(k_1,\ldots,k_r)}\quad (n\in \mathbb{Z}_{\geq 0}).$$
Note that the left-hand side of the definition \eqref{mpBern} is exactly the same as the function appearing in the integral of the definition
\eqref{Def-xi} of $\xi(k_1,\ldots,k_r;s)$, and we observe that this function can be realized as 
\[ \frac{{\rm Li}(k_1,\ldots,k_r;1-e^{-t})}{e^t-1}=\frac{d}{dt}{\rm Li}(k_1,\ldots,k_{r-1},k_r+1;1-e^{-t}) \]
and moreover that the function $1-e^{-t}$ is the inverse of ${\rm Li}(1;t)=-\log(1-t)$:
\[ {\rm Li}(1;1-e^{-t})=t. \]
Likewise, we see that the function appearing in \eqref{Def-psi} is
\[ \frac{A(k_1,\ldots,k_r;\tanh(t/2))}{\sinh t}=\frac{d}{dt}A(k_1,\ldots,k_{r-1},k_r+1;\tanh(t/2)), \]
where $\tanh(t/2)$ is the inverse of $A(1;t)$ (see \eqref{A1exp}).

Our newly introduced function $\aa(k_1,\ldots,k_r; z)$ satisfies, by definition~\eqref{Def-aa-multi}, the same derivative formula as~\eqref{deriv-a-multi}
and therefore, if we follow the same line, it would be natural to consider the function
\[ \frac{d}{dt}\aa(k_1,\ldots,k_{r-1},k_r+1;h(t)), \]
where $h(t)$ is the inverse of $\aa(1;t)$.  

\begin{lemma}\label{defofh} The function
\begin{equation}
h(x)=\tanh (x)+\frac{i}{\cosh (x)},  \label{Def-tau}
\end{equation}
which is also equal to 
\begin{equation}
\tanh\left(\frac{x}{2}+\frac{\pi i}{4}\right), \label{tau-exp1} 
\end{equation}
is the inverse of $\aa(1;x)$.  We have 
\begin{equation*}
\frac{d}{dx}h(x)=-\frac{i}{\cosh x}h(x),  \label{tau-deri} 
\end{equation*}
and thus for $k_1,\ldots,k_r\in \mathbb{Z}_{\geq 1}$, 
\begin{equation}
\frac{d}{dx}\aa (k_1,\ldots,k_r;h(x))=
\begin{cases}
-i\,\dfrac{\aa (k_1,\ldots,k_r-1;h(x))}{\cosh x} & (k_r\geq 2)\\
-i\aa(k_1,\ldots,k_{r-1};h(x)) & (k_r=1).
\end{cases}
\label{deriv-aa}
\end{equation}

\end{lemma}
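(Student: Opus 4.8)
The plan is to verify the three claims in Lemma~\ref{defofh} in sequence: first the identity \eqref{tau-exp1} between the two expressions for $h(x)$, then that $h$ inverts $\aa(1;x)$, then the derivative formula for $h$, and finally to deduce \eqref{deriv-aa} by the chain rule together with the defining derivative relation \eqref{Def-aa-multi}.

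First I would establish that $\tanh(x)+i/\cosh(x)=\tanh(x/2+\pi i/4)$. The cleanest route is the addition formula for $\tanh$: writing $\tanh(x/2+\pi i/4)=(\tanh(x/2)+\tanh(\pi i/4))/(1+\tanh(x/2)\tanh(\pi i/4))$ and using $\tanh(\pi i/4)=i\tan(\pi/4)=i$, the right-hand side becomes $(\tanh(x/2)+i)/(1+i\tanh(x/2))$. Multiplying numerator and denominator by the conjugate $1-i\tanh(x/2)$ and applying the half-angle identities $2\tanh(x/2)/(1+\tanh^2(x/2))=\tanh(x)$ and $(1-\tanh^2(x/2))/(1+\tanh^2(x/2))=1/\cosh(x)$ yields exactly $\tanh(x)+i/\cosh(x)$. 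This is routine but should be displayed carefully to keep track of the factor of $i$.

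Next I would confirm that $h$ is the inverse of $\aa(1;\cdot)$. From \eqref{aa1} we have $\aa(1;z)=2\tanh^{-1}(z)-\pi i/2$, so its inverse is $z\mapsto \tanh\bigl((z+\pi i/2)/2\bigr)=\tanh(z/2+\pi i/4)$, which is precisely $h$ by the identity just proved; equivalently one checks $\aa(1;h(x))=x$ directly. For the derivative of $h$, I would differentiate $h(x)=\tanh(x)+i\operatorname{sech}(x)$ using $\tanh'(x)=\operatorname{sech}^2 x$ and $(\operatorname{sech} x)'=-\operatorname{sech} x\tanh x$, obtaining $h'(x)=\operatorname{sech}^2 x-i\operatorname{sech} x\tanh x=\operatorname{sech} x(\operatorname{sech} x-i\tanh x)=-i\operatorname{sech} x(\tanh x+i\operatorname{sech} x)=-i\,h(x)/\cosh x$, which is the stated formula $h'(x)=-i\,h(x)/\cosh x$.

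Finally, \eqref{deriv-aa} follows by the chain rule. For $k_r\ge 2$, differentiating $\aa(k_1,\ldots,k_r;h(x))$ gives $\frac{d}{dz}\aa(k_1,\ldots,k_r;z)\big|_{z=h(x)}\cdot h'(x)$; by the defining relation \eqref{Def-aa-multi} the $z$-derivative is $\frac1z\aa(k_1,\ldots,k_r-1;z)$, so evaluating at $z=h(x)$ and multiplying by $h'(x)=-i\,h(x)/\cosh x$ produces a factor $\frac{1}{h(x)}\cdot(-i\,h(x)/\cosh x)=-i/\cosh x$, yielding $-i\,\aa(k_1,\ldots,k_r-1;h(x))/\cosh x$. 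For $k_r=1$ the $z$-derivative is $\frac{2}{1-z^2}\aa(k_1,\ldots,k_{r-1};z)$, so I must check $\frac{2}{1-h(x)^2}\cdot h'(x)=-i$; since $h$ inverts $\aa(1;\cdot)$ whose derivative is $2/(1-z^2)$, one has $h'(x)=\frac{1-h(x)^2}{2}$, giving $\frac{2}{1-h(x)^2}h'(x)=1$—but this must be reconciled with the target coefficient $-i$. The main obstacle is exactly this bookkeeping of the constant: I expect the resolution is that the two cases of \eqref{Def-aa-multi} already carry the factor $2/(1-z^2)$ versus $1/z$ asymmetrically, and the computation $\frac{2}{1-h^2}h'=1$ shows the $k_r=1$ case derivative equals $\aa(k_1,\ldots,k_{r-1};h(x))$ rather than $-i$ times it, so I would recheck whether the intended normalization uses $h'(x)=-i\,h(x)/\cosh x$ consistently by verifying the identity $\frac{2}{1-h(x)^2}=\frac{-i}{\cosh x}\cdot\frac{1}{?}$ directly from $h(x)=\tanh x+i\operatorname{sech} x$, computing $1-h(x)^2$ explicitly and confirming it equals $-2i\,\operatorname{sech} x\cdot(\text{the appropriate factor})$. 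This single constant-tracking identity is where care is genuinely required; the rest is mechanical.
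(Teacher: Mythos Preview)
Your argument for the identity $\tanh(x)+i/\cosh(x)=\tanh(x/2+\pi i/4)$, for $h$ being the inverse of $\aa(1;\cdot)$, for $h'(x)=-i\,h(x)/\cosh x$, and for the $k_r\ge2$ case of \eqref{deriv-aa} is correct and matches the paper's (very brief) proof; the paper reaches the trigonometric identity by rewriting both sides in exponentials rather than via the $\tanh$ addition formula, but that is only a cosmetic difference.

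The discrepancy you flagged in the $k_r=1$ case is real, and you should stop trying to reconcile it: the stated formula in the paper is simply wrong by a factor of $-i$. Your computation $h'(x)=(1-h(x)^2)/2$ (which is just the inverse-function rule applied to $\aa(1;z)$, and which agrees with $h'(x)=-i\,h(x)/\cosh x$ once one expands $1-h(x)^2=2\operatorname{sech} x(\operatorname{sech} x-i\tanh x)$) gives
\[
\frac{d}{dx}\aa(k_1,\ldots,k_{r-1},1;h(x))=\frac{2}{1-h(x)^2}\,\aa(k_1,\ldots,k_{r-1};h(x))\cdot h'(x)=\aa(k_1,\ldots,k_{r-1};h(x)),
\]
with no $-i$. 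A sanity check: $\aa(1;h(x))=x$, so its derivative is $1$, whereas the printed formula would give $-i\,\aa(\emptyset;h(x))=-i$. The paper's own one-line justification (``the derivative formula follows from the definition \eqref{Def-aa-multi}'') yields exactly what you obtained. The typo is harmless downstream: the $k_r=1$ case is only invoked in Lemma~\ref{Lem-3-5} (an $O$-estimate where the constant is irrelevant) and in the parity remark \eqref{Parity-aa-2} (again insensitive to the constant). So record the correct formula and move on.
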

\begin{proof}
We know from \eqref{aa1} that
\[ \aa(1;x)=2\tanh^{-1}(x)-\frac{\pi i}{2}, \]
and so $\tanh\left(x/2+\pi i/4\right)$ is the inverse of $\aa(1;x)$:
\[ \aa(1;\tanh\left(\frac{x}{2}+\frac{\pi i}{4}\right))=2\tanh^{-1}(\tanh\left(\frac{x}{2}+\frac{\pi i}{4}\right))-\frac{\pi i}{2}=x.\]
Using the duplication formulas, we compute
\begin{align*}
\tanh (x)+\frac{i}{\cosh (x)}&=\frac{2\sinh (x/2) \,\cosh (x/2) +i(\cosh^2 (x/2)-\sinh^2 (x/2))}{\cosh^2(x/2) +\sinh^2 (x/2)}\\
& =\frac{i(\cosh (x/2)-i\sinh (x/2))^2}{(\cosh (x/2) +i\sinh (x/2))(\cosh (x/2) -i\sinh (x/2))}\\
& =\frac{(1+i)e^{x/2} -(1-i)e^{-{x/2}}}{(1+i)e^{x/2} +(1-i)e^{-{x/2}}}=\frac{e^{{x/2}+\pi i/4} -e^{-{x/2}-\pi i/4}}{e^{{x/2}+\pi i/4} +e^{-{x/2}-\pi i/4}}\\
&=\tanh\left(\frac{x}{2}+\frac{\pi i}{4}\right).
\end{align*}
The  derivative formula follows from the definition \eqref{Def-aa-multi}.
\end{proof}

It is amusing to note that this
$h(x)$ is essentially equal to the generating function~\eqref{gene-Euler-2} of the Euler numbers $\{\EE_n\}$ appeared in \S\ref{subsec:Ent}:
$$\frac{1}{i}h(ix)=\sec x+ \tan x.$$

By using \eqref{deriv-aa}, we may obtain the following estimate and the convergence of the integral \eqref{Def-lambda} in ${\rm Re}(s)>0$ follows.

\begin{lemma}\label{Lem-3-5} 
For $k_1,\ldots,k_r\in \mathbb{Z}_{\geq 1}$, 
\begin{equation}
\aa (k_1,\ldots,k_r;h(x))=O\left(x^{k_1+\cdots+k_r}\right)\quad (x\to \infty). \label{Order-aa}
\end{equation}
\end{lemma}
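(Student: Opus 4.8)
The plan is to prove the estimate \eqref{Order-aa} by induction on the weight $k_1+\cdots+k_r$, using the derivative formula \eqref{deriv-aa} from Lemma~\ref{defofh} to set up a recursion. The base case is the weight-one function $\aa(1;h(x))$, which by \eqref{aa1} and the defining property $\aa(1;h(x))=x$ of the inverse $h$ (established in Lemma~\ref{defofh}) is exactly equal to $x$, hence trivially $O(x)$. For the inductive step I would integrate the derivative formula from a fixed basepoint to $x$, recovering $\aa(k_1,\ldots,k_r;h(x))$ as an integral of a lower-weight quantity.

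First I would treat the two cases of \eqref{deriv-aa} separately. When $k_r=1$, we have $\frac{d}{dx}\aa(k_1,\ldots,k_r;h(x))=-i\,\aa(k_1,\ldots,k_{r-1};h(x))$; integrating and applying the inductive hypothesis $\aa(k_1,\ldots,k_{r-1};h(x))=O(x^{k_1+\cdots+k_{r-1}})$ gives, after integration, a bound $O(x^{k_1+\cdots+k_{r-1}+1})=O(x^{k_1+\cdots+k_r})$ since $k_r=1$. When $k_r\ge2$, we have $\frac{d}{dx}\aa(k_1,\ldots,k_r;h(x))=-i\,\aa(k_1,\ldots,k_r-1;h(x))/\cosh x$; here the inductive hypothesis controls the numerator by $O(x^{k_1+\cdots+k_r-1})$, and the extra factor $1/\cosh x$ decays exponentially, so the integrand is in fact $O(x^{k_1+\cdots+k_r-1}e^{-x})$, whose integral from $0$ to $x$ converges and is bounded, a fortiori $O(x^{k_1+\cdots+k_r})$. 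Combining the two cases, in either situation integrating a quantity of order $x^{w-1}$ (with $w$ the weight of the lower-weight term) against the appropriate factor produces a quantity of the claimed order, closing the induction.

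The main point requiring care will be the bookkeeping at the basepoint of integration. Since $h(x)$ is the inverse of $\aa(1;\cdot)$ and the derivative identities in \eqref{deriv-aa} are stated for all real $x$, I would integrate from $0$ (or any convenient fixed real point) to $x$ and must verify that the boundary contribution $\aa(k_1,\ldots,k_r;h(0))$ is a finite constant, so that it does not affect the asymptotic order. This is unproblematic because $h(0)=\tanh(\pi i/4)=i$ by \eqref{tau-exp1}, and $\aa(k_1,\ldots,k_r;i)$ is a finite value (indeed $\aa$ is defined by iterated integrals with basepoint $i$, so $\aa(k_1,\ldots,k_r;h(0))=\aa(k_1,\ldots,k_r;i)=0$ for every index, since the path of integration degenerates). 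Thus the constant of integration vanishes and no additive constant of higher order can appear.

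The only genuine subtlety, and the step I expect to be the real obstacle, is ensuring that the implied constants in the $O$-notation remain uniform along the induction and that the integrals are taken over a range where the derivative formula \eqref{deriv-aa} is valid for large real $x$; in particular one should confirm that $h(x)$ stays inside the region where $\aa$ is analytic so that the iterated-integral representation and its derivative are legitimate. Granting the validity of \eqref{deriv-aa} on $[0,\infty)$ as asserted in Lemma~\ref{defofh}, the estimate follows cleanly, and I would present the argument compactly as a single induction covering both the $k_r=1$ and $k_r\ge2$ cases, remarking that the exponentially decaying factor $1/\cosh x$ in the latter case actually yields a stronger (bounded) estimate, so the stated polynomial bound is in fact dominated by the $k_r=1$ steps.
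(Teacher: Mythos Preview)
Your proposal is correct and follows essentially the same argument as the paper: induction (the paper phrases it as a double induction on $r$ and $k_r$, but this is equivalent to your induction on the weight), base case $\aa(1;h(x))=x$, and integration of the derivative formula \eqref{deriv-aa} from $0$ using $h(0)=i$ so that $\aa(k_1,\ldots,k_r;h(0))=0$. Your observation that the $k_r\ge2$ case actually gives a bounded integral (rather than merely $O(x^{k_1+\cdots+k_r})$) is a slight sharpening not stated in the paper, but otherwise the arguments coincide.
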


\begin{proof}
We proceed by double induction on $r$ and $k_r$ . 

When $r=1$ and  $k_1=1$, the left-hand side is equal to $x$ and the assertion is obvious. 
For $k_1\geq 2$, by \eqref{deriv-aa} and $\cosh x \to \infty$ $(x\to \infty)$, there exists $C>0$ such that
$$|\aa (k_1;h(x))|\leq \int_0^{x}\frac{1}{|\cosh u|}|\aa (k_1-1;h(u))|\,du\leq C\int_0^{x}u^{k_1-1}\,du=\frac{C}{k_1}x^{k_1}$$
for sufficiently large $x>0$. Thus by induction we obtain the assertion when $r=1$.

Consider the case $r\geq 2$. When $k_r=1$, then we immediately obtain the assertion from \eqref{deriv-aa} and the
case $r-1$ because
\[ \aa (k_1,\ldots,k_r;h(x))=-i \int_0^x \aa (k_1,\ldots,k_{r-1};h(u))\,du. \]
Note that $h(0)=i$ and $\aa (k_1,\ldots,k_r;h(0))=0$.
If $k_r\geq 2$, then, again by \eqref{deriv-aa} we may argue similarly as in the case of $r=1$ and we obtain the assertion 
by induction.
\end{proof}

\begin{example}   By the standard integral representation of $L(s,\chi_4)$,
\[ L(s,\chi_4)=\frac1{2\Gamma(s)}\int_0^\infty \frac{t^{s-1}}{\cosh t}\, dt\quad ({\rm Re}(s)>0),  \]
we have 
$$\lambda(1;s)=\frac1{\Gamma(s)}\int_0^\infty \frac{t^{s}}{\cosh t}\, dt=2sL(s+1,\chi_4).$$
\end{example}

As in our previous cases (of `level 1' \cite{IKT2014} and `level 2' \cite{KPT}), we may define multi-poly-Euler numbers as follows
and connect them to values of the function $\lambda$ at negative integer arguments.

\begin{definition}\label{Def-3-1} 
For $k_1,\ldots,k_r\in \mathbb{Z}_{\geq 1}$, define multi-poly-Euler numbers $\{\ee_n^{(k_1,\ldots,k_r)}\}$ by
\begin{equation}
\frac{\aa (k_1,\ldots,k_r;\tanh\left(t/2+\pi i/4\right))}{\cosh t}=\sum_{n=0}^\infty \ee_n^{(k_1,\ldots,k_r)}\,\frac{t^n}{n!}. \label{Def-Poly-Euler}
\end{equation}
\end{definition}

\begin{remark}\label{Rem-Poly-Euler} 
The case of $r=1$ and $k_1=1$ becomes
\begin{equation*}
\frac{t}{\cosh t}=\sum_{n=0}^\infty \ee_n^{(1)}\,\frac{t^n}{n!},
\end{equation*}
namely $\ee_{m+1}^{(1)}=(m+1)E_m$ $(m\in \mathbb{Z}_{\geq 0})$, $E_m$ being Euler numbers defined in \eqref{euler}.
Also it should be noted that 
$$\frac{t}{\cosh t}=-2\sum_{a=1}^{4}\frac{\chi_4(a)te^{at}}{e^{4t}-1}=-2\sum_{n=0}^\infty B_{n,\chi_4}\frac{t^n}{n!},$$
where $B_{n,\chi_4}$ is the generalized Bernoulli number associated to the character $\chi_4$ of conductor 4. 
Hence $\ee^{(k)}_{n}$ is also regarded as the generalized poly-Bernoulli number of conductor $4$.
\end{remark}

\begin{remark}
Sasaki \cite{Sasaki2012}, about a decade ago, defined the poly-Euler numbers by using a different generating function as
\begin{equation}
\frac{{\rm Li}_{k}(1-e^{-4t})}{4t\cosh t}=\sum_{n=0}^\infty E_n^{(k)}\frac{t^n}{n!}, \label{Def-poly-Euler}
\end{equation}
and studied their properties together with Ohno in Ohno-Sasaki \cite{OS2012,OS2013,OS2017}. 
When $k=1$, this is 
$$\frac{1}{\cosh t}=\sum_{n=0}^\infty E_n^{(1)}\frac{t^n}{n!}$$
and $E_n^{(1)}$ coincides with the classical Euler number $E_n$. 
Also Hamahata \cite{Hama2014} defined the poly-Euler polynomials along the same line.

In \cite{OS2017}, Ohno and Sasaki further defined the zeta function of Arakawa-Kaneko type by
$$L_k(s)=\frac{1}{\Gamma(s)}\int_0^\infty t^{s-1}\frac{\Li_k(1-e^{-4t})}{8\cosh t}\,dt\quad ({\rm Re}(s)>0,\ k\in \mathbb{Z}_{\geq 1})$$
which satisfies
$$L_k(-n)=\frac{(-1)^n n E_{n-1}^{(k)}}{2}\quad (n\in \mathbb{Z}_{\geq 1}).$$
However, as far as the authors realize, it is unclear whether $L_k(s)$ connects poly-Euler numbers and certain multiple series as $\xi$ and $\psi$ functions did.

Poly-Bernoulli numbers $\{B_n^{(-k)}\}$ and $\{C_n^{(-k)}\}$ satisfy a kind of duality relations (\cite{Kaneko1997, Kaneko2010}), while those for poly-Euler numbers $\{E_n^{(-k)}\}$ (defined by \eqref{Def-poly-Euler}) are unknown. 
From the viewpoint of the current paper, we shall define poly-Euler numbers with non-positive indices $\{ \ee_n^{(-k)}\}$ $(k\geq 0)$ and a related zeta function, 
and study their properties in our forthcoming paper \cite{KKT2022} with Komori.
\end{remark}

\begin{remark}\label{Poly-Euler-parity} 
From \eqref{deriv-aa} and 
$$\aa (k_1,\ldots,k_{r};h(0))=\aa (k_1,\ldots,k_{r};i)=0,$$
we can show by induction that the identity
\begin{equation}
\aa (k_1,\ldots,k_r;h(-x))=(-1)^{k_1+\cdots+k_r}\aa (k_1,\ldots,k_r;h(x)) \label{Parity-aa-2}
\end{equation}
holds (take the derivatives of both sides). Hence,  if $n$ and $k_1+\cdots+k_r$ are of different parity then 
$\ee_n^{(k_1,\ldots,k_r)}=0$.
\end{remark}

\begin{example}\label{Exam-3-3}\ 
Consider the case of $r=1$ and $k_1=2$. 
As in the proof of \cite[Theorem 3.1]{Kaneko1997}, using \eqref{tau-deri}, we have
\begin{align*}
\frac{\aa(2;h(t))}{\cosh t}&=-\frac{i}{\cosh t}\int_0^{t}\frac{\aa(1;h(v))}{\cosh v}\,dv\\
& =-i\sum_{m=0}^{\infty}\ee_m^{(1)} \frac{t^{m-1}}{m!}\sum_{n=0}^\infty \ee_n^{(1)}\frac{t^{n+1}}{(n+1)!}\\
& =-i\sum_{N=0}^\infty \sum_{j=0}^{N}\binom{N}{j}\ee_{N-j}^{(1)}\frac{\ee_{j}^{(1)}}{j+1}\,\frac{t^N}{N!}.
\end{align*}
Hence 
$$\ee_N^{(2)}=-i\sum_{j=0}^{N}\binom{N}{j}\frac{\ee_{N-j}^{(1)}\ee_{j}^{(1)}}{j+1}\quad (N\in \mathbb{Z}_{\geq 0}).$$
Since $\ee_0^{(1)}=0$ and $\ee_{N}^{(1)}=NE_{N-1}$ $(N\geq 1)$, we have $\ee_{2m}^{(1)}=0$ $(m\in \mathbb{Z}_{\geq 0})$ and
$$\ee_{1}^{(1)}=1,\ \ee_{3}^{(1)}=-3,\ \ee_{5}^{(1)}=25,\ \ee_{7}^{(1)}=-427,\ \ldots$$
Therefore we have $\ee_{2m+1}^{(2)}=0$ $(m\in \mathbb{Z}_{\geq 0})$ and
$$\ee_0^{(2)}=0,\ \ \ee_2^{(2)}=-i,\ \ \ee_4^{(2)}=9i,\ \ \ee_6^{(2)}=-145i,\ \ldots$$
\end{example}

Using the well-known method of contour integration (see, for example, Washington \cite[Theorem 4.2]{Wash1997}), and noting Lemma \ref{Lem-3-5}, we can easily establish the following.

\begin{prop}\label{Prop-3-9}
For $k_1,\ldots,k_r\in \mathbb{Z}_{\geq 1}$, $\lambda(k_1,\ldots,k_r;s)$ can be analytically continued to the whole complex plane and satisfies
$$\lambda(k_1,\ldots,k_r;-n)=(-1)^n \ee_n^{(k_1,\ldots,k_r)}\quad (n\in \mathbb{Z}_{\geq 0}).$$
\end{prop}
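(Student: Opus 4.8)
The plan is to follow the standard contour integration method alluded to in the statement, exactly as carried out in the prototype results (see Arakawa--Kaneko \cite{AK1999} and Kaneko \cite{Kaneko1997}). The key object is the integral representation \eqref{Def-lambda}
\[ \lambda(k_1,\ldots,k_r;s)=\frac1{\Gamma(s)}\int_0^\infty t^{s-1} G(t)\,dt, \qquad G(t):=\frac{\aa(k_1,\ldots,k_r;h(t))}{\cosh t}, \]
where I write $h(t)=\tanh(t/2+\pi i/4)$ as in Lemma~\ref{defofh}. By Definition~\ref{Def-Poly-Euler}, $G(t)=\sum_{n\ge0}\ee_n^{(k_1,\ldots,k_r)} t^n/n!$ is the generating function of the multi-poly-Euler numbers, so $G$ is holomorphic at $t=0$. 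First I would replace the integral over $(0,\infty)$ with a Hankel-type contour $C$ that comes in from $+\infty$ along the positive real axis, encircles the origin, and returns to $+\infty$; this is legitimate because Lemma~\ref{Lem-3-5} gives the polynomial bound $\aa(k_1,\ldots,k_r;h(t))=O(t^{k_1+\cdots+k_r})$ together with the exponential decay of $1/\cosh t$, guaranteeing convergence at the $+\infty$ ends, and because $G$ is single-valued (so only the $t^{s-1}$ factor contributes a branch cut). The contour integral yields the standard relation $\int_C t^{s-1}G(t)\,dt=(e^{2\pi i s}-1)\int_0^\infty t^{s-1}G(t)\,dt$, whence $\lambda(k_1,\ldots,k_r;s)=\frac1{(e^{2\pi i s}-1)\Gamma(s)}\int_C t^{s-1}G(t)\,dt$.

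The point of this rewriting is that the right-hand side is manifestly entire in $s$: the contour integral $\int_C t^{s-1}G(t)\,dt$ is an entire function of $s$ (the contour avoids the origin, so $t^{s-1}$ causes no trouble, and convergence at infinity is uniform on compact $s$-sets by the bound above), while $1/\Gamma(s)$ is entire, and the zeros of $e^{2\pi i s}-1$ at the integers are cancelled by the poles of $1/\Gamma(s)$ at the non-positive integers and otherwise by holomorphy of the contour integral. This gives the analytic continuation of $\lambda(k_1,\ldots,k_r;s)$ to all of $\mathbb{C}$.

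To evaluate at $s=-n$, I would shrink the outer part of the contour and pick up the residue at $t=0$. Setting $s=-n$ with $n\in\mathbb{Z}_{\ge0}$, the factor $t^{s-1}=t^{-n-1}$ together with the Taylor expansion $G(t)=\sum_m \ee_m^{(k_1,\ldots,k_r)}t^m/m!$ produces, by the residue theorem, $\frac1{2\pi i}\oint t^{-n-1}G(t)\,dt=\ee_n^{(k_1,\ldots,k_r)}/n!$. Combining this with the limiting values of the prefactor $\frac1{(e^{2\pi i s}-1)\Gamma(s)}$ as $s\to -n$ (where $1/\Gamma(s)$ has a simple zero and $e^{2\pi i s}-1$ has a simple zero, the ratio tending to a finite constant, and the loop integral contributes the residue) yields after the standard bookkeeping $\lambda(k_1,\ldots,k_r;-n)=(-1)^n\ee_n^{(k_1,\ldots,k_r)}$. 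The main obstacle, and the only place requiring genuine care, is justifying that the contributions from the two straight portions of the contour along the positive real axis cancel up to the factor $e^{2\pi i s}-1$ and that the large-radius arc contributes nothing; both rest on the estimate in Lemma~\ref{Lem-3-5} combined with $|1/\cosh t|$ decaying exponentially, so the verification is routine but is where the hypotheses are actually used. The sign $(-1)^n$ and the precise normalization emerge from tracking the phase of $t^{s-1}$ across the branch cut, exactly as in the treatment of $\xi$ and $\psi$ in the cited references.
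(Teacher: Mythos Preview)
Your proposal is correct and is exactly the approach the paper has in mind: the paper gives no detailed proof but simply refers to ``the well-known method of contour integration'' (citing Washington~\cite[Theorem~4.2]{Wash1997}) together with the growth estimate of Lemma~\ref{Lem-3-5}, and your Hankel-contour argument is precisely that method with the relevant bookkeeping spelled out. One small clarification: your phrase ``otherwise by holomorphy of the contour integral'' should be sharpened to say that at positive integers $s=m$ the contour integral $\int_C t^{m-1}G(t)\,dt$ itself vanishes (the straight pieces cancel and the loop encloses a holomorphic integrand), which is what cancels the pole of $1/(e^{2\pi i s}-1)$ there.
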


\section{Relations between the $\lambda$-function and the one variable multiple $\TT$-function}\label{sec:zetafunct}

As in \cite{AK1999}, we consider the one variable function 
$\TT(k_1,\ldots,k_{r-1},s)$ defined by replacing $k_r$ with a variable $s$ in \eqref{ttilde-def}: 
\begin{equation*}\label{ttilde-funct} \TT(k_1,\ldots,k_{r-1},s):=
2^r\sum_{1\leq m_1<\cdots<m_r \atop m_j \equiv j \bmod 2} \frac{(-1)^{(m_r-r)/2}}{m_1^{k_1}\cdots m_{r-1}^{k_{r-1}}m_r^{s}}. \end{equation*}
Then, based on the following integral expression, we can obtain in an almost similar manner as in the previous cases
exactly the same relations among
this function, $\lambda$-function, and the $\TT$-values. Since the proofs are similar, we only give brief outlines.

\begin{lemma}[cf. \cite{AK1999}\,Theorem 3;\ \cite{KT2020-ASPM}\,Lemma 5.4] \label{Lem-4-1} For $l_1,\ldots,l_{r-1}\in \mathbb{Z}_{\geq 1}$ and ${\rm Re}(s)>1$,  
\begin{align}
\TT(l_1,\ldots,l_{r-1},s) &= \frac{1}{\Gamma(l_1)\cdots\Gamma(l_{r-1})\Gamma(s)}\int_{0}^\infty \cdots \int_{0}^\infty x_1^{l_1-1}\cdots x_{r-1}^{l_{r-1}-1}x_r^{s-1}\notag\\
& \qquad \times \prod_{j=1}^{r}\frac{1}{\cosh(x_j+\cdots+x_r)}dx_1\cdots dx_r. \label{TT-integral}
\end{align}
\end{lemma}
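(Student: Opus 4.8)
The plan is to express the multiple $\widetilde T$-value as a nested Mellin-type integral by writing each factor $1/m_j^{k_j}$ using the Gamma integral and then interchanging summation and integration.

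Recall that for the level-four $L$-value, the relevant series is
\[
\widetilde T(l_1,\ldots,l_{r-1},s)=2^r\sum_{1\le m_1<\cdots<m_r \atop m_j\equiv j \bmod 2}\frac{(-1)^{(m_r-r)/2}}{m_1^{l_1}\cdots m_{r-1}^{l_{r-1}}m_r^{s}}.
\]

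Let me think about the key steps.

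First, I would use the standard Gamma integral $1/m^a = \frac{1}{\Gamma(a)}\int_0^\infty y^{a-1}e^{-my}\,dy$ for each of the $r$ factors. Applying this to each variable $m_j$ (with exponent $l_j$ for $j<r$ and $s$ for $j=r$) gives an $r$-fold integral whose integrand contains $\prod_j e^{-m_j y_j}$. After interchanging the sum over the $m_j$ with the integrals (justified by absolute convergence for $\mathrm{Re}(s)>1$), the summation over $m_1<\cdots<m_r$ with the sign and parity constraints must be evaluated.

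The crucial step is evaluating the multiple sum. Because of the ordering $m_1<\cdots<m_r$, I would substitute the original variables $y_j$ by the "partial sum" variables $x_j = y_j+y_{j+1}+\cdots+y_r$ (equivalently $y_j = x_j - x_{j+1}$, with $x_{r+1}=0$), so that $\sum_j m_j y_j = \sum_j (m_j-m_{j-1})x_j$ with $m_0=0$. This change of variables has Jacobian $1$ and converts the constraint $m_1<\cdots<m_r$ into a condition on the increments $n_j := m_j-m_{j-1}\ge 1$.

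The heart of the matter is then summing the geometric-type series in the increments $n_j$ together with the parity pattern $m_j\equiv j$ and the sign $(-1)^{(m_r-r)/2}$. Writing $m_r = n_1+\cdots+n_r$, I would show that the innermost sum over each increment produces precisely a factor $2/\cosh(x_j+\cdots+x_r) = 2/\cosh$ of the partial sum — this is exactly the level-four analogue of the computation in \cite{AK1999} and \cite{KT2020-ASPM}, where the character $\chi_4$ and the factor $2^r$ combine with $\sum_{n\text{ odd}}(-1)^{(n-1)/2}e^{-nx} = 1/\cosh x$. The sign and parity bookkeeping is what replaces the $1/(e^x-1)$ of the level-one case; tracking it correctly so that each of the $r$ factors collapses to $2/\cosh(x_j+\cdots+x_r)$ (and the $2^r$ is absorbed) is the main obstacle. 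Once the summation is carried out, comparing with the stated right-hand side — including the normalization by $\Gamma(l_1)\cdots\Gamma(l_{r-1})\Gamma(s)$ — completes the proof, and convergence for $\mathrm{Re}(s)>1$ guarantees all interchanges are legitimate.
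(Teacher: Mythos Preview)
Your overall strategy is correct and is essentially the paper's proof run in reverse: the paper simply expands each factor
\[
\frac{1}{\cosh(x_j+\cdots+x_r)}=2\sum_{m_j\ge 0}(-1)^{m_j}e^{-(2m_j+1)(x_j+\cdots+x_r)}
\]
in the integral and integrates term by term to recover the series for $\TT$.

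There is, however, a slip in your change-of-variables step. After writing the Gamma integrals in variables $y_j$ with weights $y_j^{l_j-1}$, you should \emph{not} pass to $x_j=y_j+\cdots+y_r$: that substitution would turn the weights into $(x_j-x_{j+1})^{l_j-1}$ on the simplex $x_1>\cdots>x_r>0$, and since the exponent in those coordinates is $\sum_j n_j x_j$, summing over each $n_j$ would yield $1/\cosh(x_j)$ rather than $1/\cosh(x_j+\cdots+x_r)$ --- not the stated integrand. The remedy is to leave the integration variables untouched (rename them $x_j$ if you like) and read your identity in the \emph{other} direction,
\[
\sum_{j=1}^r m_j x_j=\sum_{i=1}^r n_i\,(x_i+x_{i+1}+\cdots+x_r),\qquad n_i=m_i-m_{i-1}\ \text{odd}.
\]
Then the sign factors as $(-1)^{(m_r-r)/2}=\prod_i(-1)^{(n_i-1)/2}$, each independent sum over $n_i$ gives $\dfrac{1}{2\cosh(x_i+\cdots+x_r)}$, the prefactor $2^r$ is absorbed, and \eqref{TT-integral} follows.
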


\begin{proof}
Exactly the same method as in the proof of \cite[Theorem 3]{AK1999} using
\begin{align*}
& \frac{1}{\cosh(x_j+\cdots+x_r)}=\frac{2e^{-x_j-\cdots-x_r}}{1+e^{-2(x_j+\cdots+x_r)}} =2\sum_{m_j=0}^\infty (-1)^{m_j}e^{-(2m_j+1)(x_j+\cdots+x_r)}
\end{align*}
works, and the iterated integral can be computed to obtain the assertion.
\end{proof}

\begin{theorem}[cf. \cite{AK1999}\,Theorem 8;\ \cite{KT2020-ASPM}\,Theorem 5.3] \label{Th-4-2} For $r, k\in \mathbb{Z}_{\geq 1}$, 
\begin{align}
& \lambda(\underbrace{1,\ldots,1}_{r-1},k;s) \notag  \\
&=(-1)^{k-1}i^{1-k}\sum_{a_1,\ldots,a_k\geq 0 \atop a_1+\cdots+a_k=r}
\binom{s+a_k-1}{a_k}\cdot \TT(a_1+1,\ldots,a_{k-1}+1,a_k+s)\notag\\
&\quad +i^{1-k}\sum_{j=0}^{k-2}(-1)^{j}\,\TT(\underbrace{1,\ldots,1}_{k-2-j},r+1)\cdot 
\TT(\underbrace{1,\ldots,1}_{j},s).\label{eq-Th-4-2}
\end{align}
\end{theorem}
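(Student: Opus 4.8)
The plan is to prove the identity by comparing iterated-integral representations of the two sides, following \cite[Theorem 8]{AK1999} and \cite[Theorem 5.3]{KT2020-ASPM} almost verbatim. First I would record an integral representation of the left-hand side. Starting from the defining integral \eqref{Def-lambda} and unfolding $\aa(\underbrace{1,\ldots,1}_{r-1},k;h(t))$ as an iterated integral --- using $\aa(\underbrace{1,\ldots,1}_{r};h(x))=x^r/r!$ (immediate from $\aa(1;h(x))=x$ and the shuffle relation, as in \eqref{11111}) followed by the $k_r\ge2$ case of the derivative formula \eqref{deriv-aa} for the remaining $k-1$ integrations --- one arrives at
\[ \lambda(\underbrace{1,\ldots,1}_{r-1},k;s)=\frac{(-i)^{k-1}}{r!\,\Gamma(s)}\mathop{\int\cdots\int}\limits_{0<u_1<\cdots<u_k}u_1^{r}\,u_k^{s-1}\prod_{j=1}^{k}\frac{du_j}{\cosh u_j}. \]
Note the placement of the two boundary powers: the integer power $r$ on the smallest variable and the spectral power $s-1$ on the largest.

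Next I would compute the integral representation of the first (main) sum on the right via Lemma~\ref{Lem-4-1}. After the substitution carrying the cumulative $\cosh$-arguments to the ordered simplex $0<u_1<\cdots<u_k$, the value $\TT(a_1+1,\ldots,a_{k-1}+1,a_k+s)$ becomes a simplex integral whose weight is $u_1^{a_k+s-1}$ times the gap factors $(u_i-u_{i-1})^{a_{k+1-i}}$. Writing $\binom{s+a_k-1}{a_k}=\Gamma(s+a_k)/(\Gamma(s)\,a_k!)$, the gamma factors collapse because $\Gamma(s+a_k)/\Gamma(a_k+s)=1$, and summing over all compositions $a_1+\cdots+a_k=r$ the multinomial theorem applies to the bases $u_1,\,u_2-u_1,\ldots,u_k-u_{k-1}$, whose telescoping sum is $u_k$. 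Hence
\[ \sum_{a_1+\cdots+a_k=r}\binom{s+a_k-1}{a_k}\TT(a_1+1,\ldots,a_{k-1}+1,a_k+s)=\frac1{r!\,\Gamma(s)}\mathop{\int\cdots\int}\limits_{0<u_1<\cdots<u_k}u_k^{r}\,u_1^{s-1}\prod_{j=1}^{k}\frac{du_j}{\cosh u_j}, \]
the same integral as before but with the two boundary powers interchanged.

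Combining constants (using $(-i)^{k-1}=i^{1-k}$ and $(-1)^{k-1}i^{1-k}=i^{k-1}$), the theorem reduces to the single integral identity
\[ \mathop{\int\cdots\int}\limits_{0<u_1<\cdots<u_k}\left(u_1^{r}u_k^{s-1}-(-1)^{k-1}u_k^{r}u_1^{s-1}\right)\prod_{j=1}^{k}\frac{du_j}{\cosh u_j}=r!\,\Gamma(s)\sum_{j=0}^{k-2}(-1)^j\TT(\underbrace{1,\ldots,1}_{k-2-j},r+1)\,\TT(\underbrace{1,\ldots,1}_j,s). \]
This is where the work lies. By Lemma~\ref{Lem-4-1} each product on the right is a product of two simplex integrals of dimensions $k-1-j$ and $j+1$, carrying the power $r$ on the minimum of the first block of variables and the power $s-1$ on the minimum of the second; expanding this product over all interleavings of the two increasing blocks (the harmonic decomposition) yields ordered integrals over all $k$ variables in which the global minimum always carries one of the two marked powers, while the other marked power sits at some interior position. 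The hard part will be the purely combinatorial shuffle-count identity showing that, after summing over $j$ with the alternating signs $(-1)^j$, every ``interior'' contribution telescopes away and only the two extreme configurations $u_1^{r}u_k^{s-1}$ and $u_k^{r}u_1^{s-1}$ survive, with coefficients $1$ and $-(-1)^{k-1}$ respectively. This bookkeeping is exactly the mechanism of \cite[Theorem 8]{AK1999} and \cite[Theorem 5.3]{KT2020-ASPM}, and I would carry it out by the same counting argument, anchoring it on the base cases $k=1$ (no product terms, giving $\lambda(\underbrace{1,\ldots,1}_{r};s)=\binom{s+r-1}{r}\TT(r+s)$) and $k=2$ (one product term $\TT(r+1)\TT(s)$, arising from splitting the square $(0,\infty)^2$ into the triangles $u_1<u_2$ and $u_1>u_2$) to fix all signs and normalizations.
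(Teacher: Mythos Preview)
Your reduction is correct and in fact reproduces the paper's two ingredients: your integral for $\lambda(\underbrace{1,\ldots,1}_{r-1},k;s)$ is the paper's $J_k^{(r,k)}(s)$ (unfolded), and your integral for the main sum is exactly the paper's computation of $J_1^{(r,k)}(s)$ via the multinomial theorem and Lemma~\ref{Lem-4-1}. Where you diverge is in the proof of the residual integral identity. The paper does \emph{not} expand the products $\TT(\underbrace{1,\ldots,1}_{k-2-j},r+1)\,\TT(\underbrace{1,\ldots,1}_{j},s)$ by shuffling and then telescope; instead it introduces intermediate integrals $J_\nu^{(r,k)}(s)$ for $1\le\nu\le k$ and integrates the innermost variable $x_\nu$ from $0$ to $\infty$ using \eqref{deriv-aa}, obtaining the one–step recursion
\[
J_\nu^{(r,k)}(s)=i\,\aa(\underbrace{1,\ldots,1}_{r-1},\nu+1;1)\cdot\TT(\underbrace{1,\ldots,1}_{k-\nu-1},s)-i\,J_{\nu+1}^{(r,k)}(s),
\]
whose iteration links $J_1$ (the main sum) to $J_k=\lambda$, the boundary values $\aa(\underbrace{1,\ldots,1}_{r-1},\nu+1;1)$ being identified with $\TT(\underbrace{1,\ldots,1}_{\nu-1},r+1)$ via \eqref{aaat1}. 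This is also the mechanism in \cite[Theorem~8]{AK1999} and \cite[Theorem~5.3]{KT2020-ASPM}, so your remark that your shuffle bookkeeping is ``exactly'' that mechanism is not accurate. That said, your alternative is perfectly valid: after shuffling, the coefficient of $u_1^{r}u_b^{\,s-1}$ (resp.\ $u_a^{r}u_1^{\,s-1}$) in the $j$-th product is $\binom{k-b}{j}$ (resp.\ $\binom{k-a}{k-2-j}$), and the alternating sum over $j$ collapses by $\sum_j(-1)^j\binom{n}{j}=[n=0]$ to leave only the two extreme terms $u_1^{r}u_k^{\,s-1}$ and $(-1)^{k}u_k^{r}u_1^{\,s-1}$, as you predicted. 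The paper's route is shorter and avoids the shuffle combinatorics by packaging everything into the single evaluation \eqref{aaat1}; yours is more elementary in that it never needs the value of $\aa$ at $1$, at the cost of a small binomial computation.
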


\begin{proof}
The method of the proof is similar to that of \cite[Theorem 8]{AK1999} and \cite[Theorem 5.7]{KT2020-ASPM} (see also \cite[Theorem 7]{Sasaki2012}). 
Given $r,k\ge1$, introduce the following integral 
\begin{align*}
J_\nu^{(r,k)}(s)& =\frac{1}{\Gamma(s)}\! \int_{0}^\infty \!\!\!\cdots\! \int_{0}^\infty 
\frac{\aa(\overbrace{1,\ldots,1}^{r-1},\nu;h(x_\nu+\cdots+x_k))}{\prod_{l=\nu}^{k}
\cosh(x_l+\cdots+x_k)} x_k^{s-1}\,dx_\nu\cdots dx_k\quad (1\leq \nu \leq k). 
\end{align*}
We compute $J_1^{(r,k)}(s)$ in two different ways.  First, since 
\[ \aa(\underbrace{1,\ldots,1}_{r};h(x_1+\cdots+x_k))=\frac{\aa(1;h(x_1+\cdots+x_k))^r}{r!}=
\frac{(x_1+\cdots+x_k)^r}{r!}  \]
by the shuffle product and Lemma~\ref{defofh}, we have
\begin{align*}
J_1^{(r,k)}(s)& =\frac{1}{\Gamma(s)\, r!}\int_{0}^\infty \cdots \int_{0}^\infty 
\frac{(x_1+\cdots+x_k)^rx_k^{s-1}}{\prod_{l=1}^{k}\cosh(x_l+\cdots+x_k)}\,dx_1\cdots dx_k\\
& =\frac{1}{\Gamma(s)}\!\!\sum_{a_1+\cdots+a_k=r}
\frac{1}{a_1!\cdots a_k!}\int_{0}^\infty\!\!\! \cdots\! \int_{0}^\infty x_1^{a_1}\cdots x_{k-1}^{a_{k-1}}x_k^{s+a_k-1}\\
& \qquad \times \frac1{\prod_{l=1}^{k}\cosh(x_l+\cdots+x_k)}\,dx_1\cdots dx_k\\
&=\sum_{a_1+\cdots+a_k=r}\frac{\Gamma(s+a_k)}{\Gamma(s)a_k!}
\times \frac1{\Gamma(a_1+1)\cdots\Gamma(a_{k-1}+1)\Gamma(s+a_k)}\\
&\qquad\times \int_{0}^\infty\!\!\! \cdots\! \int_{0}^\infty 
\frac{x_1^{a_1}\cdots x_{k-1}^{a_{k-1}}x_k^{s+a_k-1}}
{\prod_{l=1}^{k}\cosh(x_l+\cdots+x_k)}\,dx_1\cdots dx_k.
\end{align*}
Using Lemma~\ref{Lem-4-1} for the last integral, we obtain
\begin{align}
 \label{I1-1}   J_1^{(r,k)}(s) &=   \sum_{a_1+\cdots+a_k=r}\binom{s+a_k-1}{a_k} \cdot \TT(a_1+1,\ldots,a_{k-1}+1,a_k+s).
\end{align}

Secondly, by Lemma \ref{defofh}, 
we compute
\begin{align*}
J_\nu^{(r,k)}(s)
&=\frac{2^r}{\Gamma(s)}  \int_{0}^\infty\!\!\! \cdots\! \int_{0}^\infty 
\left[ i\aa(\underbrace{1,\ldots,1}_{r-1},\nu+1;h(x_\nu+\cdots +x_k))
\right]_{x_\nu=0}^\infty\\
&\qquad \times \frac1{\prod_{l=\nu+1}^k \cosh(x_l+\cdots+x_k)}\, x_k^{s-1}\,dx_{\nu+1}\cdots dx_k\\
&=i\aa(\underbrace{1,\ldots,1}_{r-1},\nu+1;1)\cdot
\TT(\underbrace{1,\ldots,1}_{k-\nu-1},s) -i\,J_{\nu+1}^{(r,k)}\\
&=i\aa(\underbrace{1,\ldots,1}_{r-1},\nu+1;1)\cdot
\TT(\underbrace{1,\ldots,1}_{k-\nu-1},s)\\
&\ \ -i^2\aa(\underbrace{1,\ldots,1}_{r-1},\nu+2;1)\cdot
\TT(\underbrace{1,\ldots,1}_{k-\nu-2},s) +i^2\,J_{\nu+2}^{(r,k)}.
\end{align*}
Therefore, repeating this operation, we obtain 
\begin{align}
J_1^{(r,k)}(s)&=\sum_{\nu=1}^{k-1}(-1)^{\nu-1}i^\nu \aa(\underbrace{1,\ldots,1}_{r-1},\nu+1;1)\cdot \TT(\underbrace{1,\ldots,1}_{k-\nu-1},s)+(-1)^{k-1}i^{k-1}J_{k}^{(r,k)}(s) \notag \\
&=\sum_{j=0}^{k-2}(-1)^{k-j}i^{k-j-1}\aa(\underbrace{1,\ldots,1}_{r-1},k-j;1)\TT(\underbrace{1,\ldots,1}_{j},s)\notag \\
&\qquad +(-1)^{k-1}i^{k-1}\lambda(\underbrace{1,\ldots,1}_{r-1},k;s),\label{I1-2}
\end{align}
by setting $j=k-\nu-1$ and noting
\[ J_k^{(r,k)}(s)=\lambda(\underbrace{1,\ldots,1}_{r-1},k;s). \]
Comparing \eqref{I1-1} and \eqref{I1-2}, we obtain the assertion.
\end{proof}

\begin{theorem}[cf. \cite{AK1999}\ Theorem 9\ (i);\ \cite{KT2020-ASPM}\ Theorem 5.5]\label{T-5-1}\ For $r,k\in \mathbb{Z}_{\geq 1}$ and $m\in \mathbb{Z}_{\geq 0}$,
\begin{align}
 &\lambda(\underbrace{1,\ldots,1}_{r-1},k;m+1)\notag\\
 &=i^{1-k}\sum_{a_1,\ldots,a_k\geq 0 \atop a_1+\cdots+a_k=m}\binom{a_k+r}{r}
 \cdot \TT(a_1+1,\ldots,a_{k-1}+1,a_k+r+1). \label{ee-5-4}
\end{align}
\end{theorem}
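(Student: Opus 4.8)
The plan is to show that the two sides of \eqref{ee-5-4} are literally the same $k$-fold integral over the positive orthant. The right-hand side I would turn into an integral using the integral representation of admissible $\TT$-values (Lemma~\ref{Lem-4-1}), and the left-hand side using the integral definition \eqref{Def-lambda} of $\lambda$ together with an unfolding of $\aa$ via the derivative formula \eqref{deriv-aa}. Note first that $s=m+1\ge1$ lies in the convergence range, and that every index $(a_1+1,\ldots,a_{k-1}+1,a_k+r+1)$ on the right is admissible since its last entry is $\ge r+1\ge2$, so no regularization is needed.

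For the right-hand side, I would apply Lemma~\ref{Lem-4-1} with $l_j=a_j+1$ and $s=a_k+r+1$, so that $\Gamma(s)=(a_k+r)!$. Writing $\binom{a_k+r}{r}=(a_k+r)!/(a_k!\,r!)$, the factor $(a_k+r)!$ cancels, and each summand becomes $\frac{1}{r!}\cdot\frac{x_1^{a_1}\cdots x_{k-1}^{a_{k-1}}x_k^{a_k}}{a_1!\cdots a_k!}\,x_k^{r}$ integrated against $\prod_{j=1}^{k}1/\cosh(x_j+\cdots+x_k)$. Summing over $a_1+\cdots+a_k=m$ and invoking the multinomial theorem, $\sum \frac{x_1^{a_1}\cdots x_k^{a_k}}{a_1!\cdots a_k!}=(x_1+\cdots+x_k)^m/m!$, so the right-hand side equals $i^{1-k}$ times
\[
\frac{1}{m!\,r!}\int_{(0,\infty)^k}(x_1+\cdots+x_k)^m\,x_k^{r}\prod_{j=1}^{k}\frac{1}{\cosh(x_j+\cdots+x_k)}\,dx_1\cdots dx_k.
\]

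For the left-hand side, by \eqref{Def-lambda} and $h(t)=\tanh(t/2+\pi i/4)$ we have $\lambda(\underbrace{1,\ldots,1}_{r-1},k;m+1)=\frac1{m!}\int_0^\infty t^m\,\aa(\underbrace{1,\ldots,1}_{r-1},k;h(t))/\cosh t\,dt$. I would unfold $\aa(\underbrace{1,\ldots,1}_{r-1},k;h(x))$ by applying \eqref{deriv-aa}: reducing the last entry from $k$ to $1$ uses the $k_r\ge2$ case $k-1$ times, each contributing a factor $-i$ and one integration $\int du/\cosh u$, the lower endpoint contributing nothing because $h(0)=i$ and $\aa(\cdots;i)=0$; the inner end is $\aa(\underbrace{1,\ldots,1}_{r};h(y))=\aa(1;h(y))^r/r!=y^{r}/r!$ by the shuffle product and Lemma~\ref{defofh}, exactly as in the proof of Theorem~\ref{Th-4-2}. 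This yields
\[
\aa(\underbrace{1,\ldots,1}_{r-1},k;h(x))=(-i)^{k-1}\int_{0<y_1<\cdots<y_{k-1}<x}\frac{y_1^{r}/r!}{\cosh y_1\cdots\cosh y_{k-1}}\,dy_1\cdots dy_{k-1}.
\]
Substituting into the integral for $\lambda$ and setting $S_1=x>S_2=y_{k-1}>\cdots>S_k=y_1>0$ (so the outer $1/\cosh t$ supplies the $k$-th $\cosh$ factor and $t^m=S_1^m$, $y_1^{r}=S_k^{r}$), the left-hand side becomes $\frac{(-i)^{k-1}}{m!\,r!}\int_{S_1>\cdots>S_k>0}S_1^m\,S_k^{r}\prod_{j=1}^{k}1/\cosh S_j\,dS_1\cdots dS_k$. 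On the right-hand side, the change of variables $S_j=x_j+\cdots+x_k$ (Jacobian $1$, mapping $(0,\infty)^k$ bijectively onto $S_1>\cdots>S_k>0$, with $x_1+\cdots+x_k=S_1$ and $x_k=S_k$) produces the identical simplex integral. Since $(-i)^{k-1}=i^{-(k-1)}=i^{1-k}$, the two sides agree, proving the theorem.

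I expect the main obstacle to be the bookkeeping in the unfolding step: pinning down the exact number of $-i$ factors, the correct placement of the $1/\cosh$ factors versus the single $y_1^{r}/r!$, and then reconciling the powers of $i$ through the identity $(-i)^{k-1}=i^{1-k}$. Everything else (the multinomial collapse and the measure-preserving change of variables) is routine once the admissibility observation is in place.
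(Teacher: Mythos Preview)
Your proof is correct and follows essentially the same route as the paper's: both unfold $\aa(\underbrace{1,\ldots,1}_{r-1},k;h(x))$ via the derivative formula \eqref{deriv-aa} down to $\aa(\underbrace{1,\ldots,1}_{r};h(y))=y^r/r!$, use the multinomial theorem, and identify the resulting simplex and orthant integrals through the change of variables $S_j=x_j+\cdots+x_k$ together with Lemma~\ref{Lem-4-1}. The only cosmetic difference is that you treat the right-hand side first and then match it to the left, whereas the paper proceeds in the opposite order.
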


\begin{proof}
By Lemma \ref{defofh}, we have
\begin{align*}
& \lambda(\underbrace{1,\ldots,1}_{r-1},k;m+1)\\
&=\frac{1}{m!i}\int_0^\infty \frac{t_k^{m}}{\cosh t_k} \int_0^{t_k}
\frac{\aa(\overbrace{1,\ldots,1}^{r-1},k-1;h(t_{k-1})}{\cosh t_{k-1}}
\, dt_{k-1}dt_k\\
&=\frac{1}{m!i^2}\int_0^\infty \frac{t_k^{m}}{\cosh t_k} \int_0^{t_k}
\frac{1}{\cosh t_{k-1}}\int_0^{t_{k-1}}\\
&\qquad\qquad\frac{\aa(\overbrace{1,\ldots,1}^{r-1},k-2;h(t_{k-2}/2))}{\cosh t_{k-2}}
\, dt_{k-2}dt_{k-1}dt_k\\
&=\cdots\\
&=\frac{1}{m!i^{k-1}}\int_0^\infty \int_0^{t_k}\cdots \int_0^{t_2}
\frac{t_k^m\, \aa(\overbrace{1,\ldots,1}^{r};h(t_1))}
{\cosh(t_k)\cdots\cosh(t_1)}\, dt_1\cdots dt_k\\
&=\frac{i^{1-k}}{m!r!}\int_0^\infty \int_0^{t_k}\cdots \int_0^{t_2}
\frac{t_k^m\,t_1^r}
{\cosh(t_k)\cdots\cosh(t_1)}\, dt_1\cdots dt_k.
\end{align*}
By the change of variables 
\[t_1=x_k, t_2=x_{k-1}+x_k,\ldots, t_k=x_1+\cdots+x_k,\]
and using Lemma \ref{Lem-4-1}, 
we obtain 
\begin{align*}
&\lambda(\underbrace{1,\ldots,1}_{r-1},k;m+1)\\
&=\frac{i^{1-k}}{m!r!}\int_0^\infty\cdots \int_0^\infty
\frac{(x_1+\cdots+x_k)^m\,x_k^r}
{\prod_{l=1}^k\cosh(x_l+\cdots+x_k)}\, dx_1\cdots dx_k\\
&=i^{1-k}\sum_{a_1+\cdots+a_k=m}\binom{a_k+r}{r}
 \cdot \TT(a_1+1,\ldots,a_{k-1}+1,a_k+r+1). 
\end{align*}
\end{proof}

Setting $s=m+1$ in Theorem~\ref{Th-4-2} and comparing with Theorem~\ref{T-5-1}, 
we obtain the following\footnote{Ce Xu has kindly
informed us of their recent paper \cite{XZ} with Jianqiang Zhao, where they study more general
level 4 multiple $L$-values and obtain a similar theorem (\cite[Theorem~4.4]{XZ}) in their
setting, of which the following is a special case.} which corresponds to \cite[Corollary 11]{AK1999}.

\begin{theorem}[cf. \cite{AK1999}\ Corollary 11;\ \cite{KT2020-ASPM}\ Theorem 5.7] \label{Th-4-7}\ For $m,r\geq 1$ and $k\ge2$,
\begin{align}
&\sum_{a_1,\ldots,a_k\geq 0 \atop a_1+\cdots+a_k=m}\binom{a_k+r}{r}\cdot \TT(a_1+1,\ldots,a_{k-1}+1,a_k+r+1) \notag\\
& \ +(-1)^k \sum_{a_1,\ldots,a_k\geq 0 \atop a_1+\cdots+a_k=r}\binom{a_k+m}{m}\cdot \TT(a_1+1,\ldots,a_{k-1}+1,a_k+m+1)\notag\\
& =\sum_{j=0}^{k-2}(-1)^j \TT(\underbrace{1,\ldots,1}_{k-j-2},r+1)\cdot \TT(\underbrace{1,\ldots,1}_{j},m+1). \label{eq-Th-4-7}
\end{align}
\end{theorem}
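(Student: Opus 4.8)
The plan is to derive \eqref{eq-Th-4-7} by equating, at the special argument $s=m+1$, the two expressions available for the one-variable function $\lambda(\underbrace{1,\ldots,1}_{r-1},k;s)$: the one furnished by Theorem~\ref{Th-4-2} and the one furnished by Theorem~\ref{T-5-1}. Since both input theorems are already proved, the argument is purely a matter of matching terms, and no fresh analytic or combinatorial input is needed.

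First I would specialize $s=m+1$ in \eqref{eq-Th-4-2}. The binomial factor $\binom{s+a_k-1}{a_k}$ then becomes $\binom{m+a_k}{a_k}=\binom{m+a_k}{m}$, so the first sum on the right of \eqref{eq-Th-4-2} reads $(-1)^{k-1}i^{1-k}\sum_{a_1+\cdots+a_k=r}\binom{a_k+m}{m}\TT(a_1+1,\ldots,a_{k-1}+1,a_k+m+1)$, while the second sum is $i^{1-k}\sum_{j=0}^{k-2}(-1)^{j}\TT(\underbrace{1,\ldots,1}_{k-2-j},r+1)\TT(\underbrace{1,\ldots,1}_{j},m+1)$. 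On the other hand, \eqref{ee-5-4} of Theorem~\ref{T-5-1} gives the closed form $\lambda(\underbrace{1,\ldots,1}_{r-1},k;m+1)=i^{1-k}\sum_{a_1+\cdots+a_k=m}\binom{a_k+r}{r}\TT(a_1+1,\ldots,a_{k-1}+1,a_k+r+1)$.

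Equating these two expressions for the same quantity $\lambda(\underbrace{1,\ldots,1}_{r-1},k;m+1)$ and cancelling the common nonzero factor $i^{1-k}$, I obtain an identity relating the sum over $a_1+\cdots+a_k=m$ (from Theorem~\ref{T-5-1}) to the sum over $a_1+\cdots+a_k=r$ carrying the sign $(-1)^{k-1}$, plus the product-of-$\TT$-values sum. Transposing the $a_1+\cdots+a_k=r$ sum to the left-hand side and using $(-1)^{k-1}=-(-1)^{k}$ then yields exactly \eqref{eq-Th-4-7}. There is no genuine obstacle here; the only points demanding care are the binomial rewriting $\binom{m+a_k}{a_k}=\binom{m+a_k}{m}$ and the sign flip $(-1)^{k-1}=-(-1)^{k}$ incurred in the transposition, while the hypothesis $k\ge2$ is precisely what ensures the product sum $\sum_{j=0}^{k-2}$ on the right matches the one appearing in both input theorems.
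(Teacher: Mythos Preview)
Your proposal is correct and follows exactly the route the paper takes: the paper derives Theorem~\ref{Th-4-7} by setting $s=m+1$ in Theorem~\ref{Th-4-2} and comparing with Theorem~\ref{T-5-1}, and the term-matching you describe (the binomial rewrite and the sign transposition) is precisely what is needed. One minor inaccuracy in your closing sentence: the product sum $\sum_{j=0}^{k-2}$ appears only in Theorem~\ref{Th-4-2}, not in Theorem~\ref{T-5-1}, but this does not affect the argument.
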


{\bf Acknowledgements.}\ 
{The authors are very grateful to Minoru Hirose, who provided a Pari-GP program to numerically  compute $\TT$-values efficiently. 
They also express their gratitude to him and Ryota Umezawa for pointing out the proof of Theorem~\ref{Teven}.
This work was supported by JSPS KAKENHI Grant Numbers JP16H06336, JP21H04430 (Kaneko), and JP21K03168  (Tsumura).}

\ 

M.~Kaneko \endgraf
Faculty of Mathematics, Kyushu University \endgraf
Motooka 744, Nishi-ku, Fukuoka 819-0395 Japan\endgraf
{\rm email}:\,kaneko.masanobu.661@m.kyushu-u.ac.jp

\ 

H.~Tsumura \endgraf
Department of Mathematical Sciences, Tokyo Metropolitan University \endgraf
1-1, Minami-Ohsawa, Hachioji, Tokyo 192-0397 Japan\endgraf
{\rm email}:\,tsumura@tmu.ac.jp


\end{document}